\newtheorem{theorem}{\bf Theorem}[section]
\newtheorem{lemma}{\bf Lemma}[section]
\newtheorem{corollary}{\bf Corollary}[section]
\theoremstyle{remark}
\newtheorem{remark}{\bf Remark}[section]
\newtheorem{definition}{\bf Definition}[section]
\newtheorem{example}{\bf Example}[section]
\newcommand\ord{\operatorname{ord}}
\begin{document}
\title{On solvability of linear differential equations in finite terms}

\author{Askold Khovanskii\thanks{The first author was partially supported by the Canadian Grant No. 156833-17.}, and Aaron Tronsgard\thanks{The second author was partially supported by the NSERC post-graduate scholarship (PGS-D) 569475}}

\maketitle

To the memory of Andrei Bolibrukh

\begin{abstract} We consider the problem of solvability of linear differential equations over a differential field~$K$. We introduce a class of special differential field extensions, which widely generalizes the classical class of  extensions of differential fields by integrals and by exponentials of integrals and which has similar properties. We announce the following result: if a linear differential equation over $K$ can not be solved by generalized quadratures, then no special extension can help solve it. In the paper we prove a weaker version of this result in which we consider only pure transcendental extensions of $K$. Our paper is self-contained and understandable for beginners. It demonstrates the  power of Liouville's original approach to problems of solvability of equations in finite terms.
\end{abstract}

\begin{center}
	{\bf Keywords:}
	
	linear differential equations; solvability by quadratures; integration in finite terms; differential Galois theory
	
	2010 Math. Subj. Class. 12H20.

\end{center}

\section{Introduction}\label{sec1}

In the 1830s Liouville started  to create a theory of solvability in finite terms.
In some cases it answers the following question: is there a solution of an equation which is representable by a certain class of formulas? In other words, is there a solution representable in {\it finite terms}?

About the same time Galois theory was invented. It provides a criterion of solvability of algebraic equations in radicals.  Liouville was inspired by Galois theory. Among his results there are two famous theorems. The First Liouville Theorem provides a criterion of integrability of elementary functions in elementary functions. The Second Liouville Theorem provides a criterion of solvability of second order linear differential equations by quadratures. Modern proofs of these theorems can be found correspondingly in \cite{Kho19} and \cite{Kho18}.

Liouville's brilliant and simple ideas were never properly understood (maybe, because Liouville did not carefully formalize them).  Only in 1910, Picard and Vessiot generalized the Second Liouville Theorem  for linear differential equations of an arbitrary order. To obtain this result they developed a new deep differential Galois theory.

J.F. Ritt up to some extent  formalized Liouville's method and generalized it (see \cite{Ritt} and \cite{kho}). Later J.F. Ritt, Kolchin and others developed differential algebra, which totally gets rid of multi-valued analytic functions and deals with abstract differential fields. It became a well established branch of pure algebra.

If one wants to show that a solution of an equation is not representable by any formula of a certain type (for example is not representable by quadratures), one has to deal with the collection of all functions representable by such formulas. Functions from this collection could be very complicated; they could be multi-valued, they could have complicated singularities, and so on. Such functions do not form a differential field, since arithmetic operations are not well defined for multi-valued functions.

In the framework of differential algebra, one can state and solve the problem of solvability of linear differential equations by quadratures. These results are  applicable to analytic theory of differential equations in the following way.

Let $K$ be a subfield  of the field $M$ of meromorphic functions on a connected open set $U$ on a complex line, which is closed under differentiation. Consider a homogeneous linear differential equation over the field $K$
\begin{equation}\label{anal}
y^{(n)}+a_1y^{n-1}+\dots+ a_n y = 0,
\end{equation}
where $a_i\in K$. With a point $z\in U$ which is a regular point for all coefficients $a_i$, one can associate: the differential field $K_z$ of germs at $z$ of functions from  $K$, the differential field $F_z$ generated over $K_z$ by germs at $z$ of all solutions of (\ref{anal}) and the differential field $M_z$ of all meromorphic germs  at~$z$.

\begin{definition}\label{def} An equation (\ref{anal}) is solvable by quadratures  over $K$ if there exists a point $z\in U$, such that inside the field $M_z$ there is a chain of extensions by quadratures of the field $K_z$ which contains the field $F_z$.
\end{definition}

One can show that the definition of solvability of an equation by quadratures which makes use of multi-valued analytic functions  is equivalent to Definition \ref{def} (see \cite{Kho18}, \cite{top}).

Thus, results from differential algebra are applicable to the analytic theory of differential equations. Moreover, in differential algebra one can consider equations over arbitrary differential fields, not only over a field of meromorphic functions.

On the other hand, there are operations over analytic functions which do not exist in differential algebra. For example, one can compose functions, and it is natural to use compositions in formulas. Problems on solvability of equations by formulas involving compositions can not be stated and solved using a pure algebraic approach.
Note, that compositions with some functions can be defined in differential algebra (see remark \ref{remark2.1}).

A reduction of the Second Liouville Theorem (and its generalizations for linear differential equations of arbitrary order) to differential algebra should, in principal, significantly simplify it: it means that the result can be proven using only arithmetic operations and differentiation. Nevertheless, algebraic results on solvability of linear differential equations of arbitrary order, which uses differential Galois theory (see \cite{pic}) and Rosenlicht's algebraic proof (see \cite{rosen} and \cite{rosen2}) of that result, which uses valuation theory, are rather involved. In \cite{Kho18}, the same result over functional differential fields is proved using Liouville's original method. This proof is relatively simple, but it uses analytic tools and is not applicable to abstract differential fields.

Our goal is to find an algebraic proof of that result which is as simple as the original proof of the Second Liouville's Theorem. We are also looking for possible generalizations of that result.

We have achieved our goal. The result is announced in the next section. Currently, one step in our proof which deals with algebraic extensions is not polished and is too involved. We have decided in this first publication, to exclude algebraic extensions from consideration.

This makes our result weaker than the classical criterion of solvability of linear differential equations by quadratures. But also our result in many ways is stronger than the classical criterion: we show that many extensions (which we call special transcendental extensions) can not help to solve linear differential equations. Besides that, our result is understandable for beginners and it clearly demonstrates Liouville's original approach.

In subsequent publications we plan to present a proof of the announced theorem. It is not as elementary as the present paper. We need the theory of algebraic curves over (possibly very big) algebraically closed fields. We also plan to consider much more general special extensions which have similar properties with special transcendental extensions.

\section{Solvability  over differential fields}
	We begin with the definition of an abstract differential field.
\begin{definition}
		A differential field is a field $K$ along with a fixed additive map $K \to K$ sending $a \mapsto a'$ satisfying the Leibniz rule $(ab)' = a'b + ab'$. The element $a'$ is called the derivative of $a \in K$.
	\end{definition}
Let us present  several examples of differential fields.

\begin{example}
\begin{enumerate}
	\hfill
\item The field $M(U)$ of all meromorphic functions on a connected open set $U$ on the extended complex line $\Bbb C^1 \cup \{\infty\}=\Bbb CP^1$ with the natural differentiation.
\item The field of meromorphic functions $M(S)$ on a connected Riemann surface $S$ equipped with non constant meromorphic projection $\pi: S\to \Bbb CP^1$ with the following  natural differentiation: for $f\in M(S)$ the derivative $f'$ is defined as the ratio $\frac{d f} {d \pi}$ of meromorphic differential forms $d f$ and $d \pi$.

\item Any subfield of the fields $M(U)$ and $M(S)$ closed under differentiation.

The differential field $\Bbb C(z)$ of rational functions of complex variable $z$ and the differential field $\Bbb C(S)$ of rational functions on an algebraic curve $S$ with non constant  rational projection $\pi:S\to \Bbb CP^1$ belong to these examples.
\end{enumerate}
\end{example}

An element $c$ of a differential field $K$ is a {\it constant} if $c'=0$. The collection of all constants in $K$ form the {\it field of constants}.

{\it Throughout the paper, by a differential field we mean a differential field of characteristic 0 with the same fixed subfield of constants}. For example, one can assume that all differential fields under consideration contain $\Bbb C$ as the subfield of constants.

Consider a nested pair $K\subset F$ of differential fields.
\begin{definition} An element $y\in F$ is called
\begin{enumerate}
\item {\it algebraic over $K$} if $y$ is a root of a polynomial over $K$;
\item an {\it integral over $K$} (or {\it an integral of $a$ $\in K$})  if $y'=a$  and $a\in K$;
\item an {\it exponential of an integral over $K$} (or  {\it an {exponential of integral of $a$ $\in K$}}) if $y'=a y$ and $a\in K$.
\end{enumerate}
\end{definition}

\begin{remark}\label{remark2.1}
Many other natural definitions can be reformulated in such a way that they will make sense over differential fields. For example, the functions $u=\exp f$ and $v=\ln f$ satisfy the following relations: $u'=f'u$, $v'=f'/f$. Thus, one can say that the element $u$ of $F$ is an {\it exponential of $f\in K$ } if $u'=f'u$, and the element $v$ of $F$ is a {\it logarithm} of $f$ if $v'=f'/f$.

Correspondingly, many different types of representability of functions in finite terms can be reformulated for differential fields.
\end{remark}

Let us define a notion of solvability by quadratures in differential algebra. Consider a nested pair $K \subset F$ of differential fields. Let $E$ be an intermediate differential field, i.e., $K \subset E \subset F$.
\begin{definition}
An extension $K\subset E$ is an {\it extension by quadratures} if there is a chain of field extensions
\begin{equation}\label{chain}
K=K_0\subset\dots\subset K_n=E
\end{equation}
such that for each $0 \leq i<n$, the extension $K_i\subset K_{i+1} $ is either an extension by adjoining an integral over $K_i$, or by adjoining an exponential of an integral over $K_i$.
\begin{enumerate}
\item An element $a\in F$ is {\it representable by quadratures over $K$} if  there is an extension $K\subset E$ by quadratures such that $a\in E$.
\item An equation over $K$ is {\it solvable by quadratures over $K$} if  there is an extension $K\subset E$ by quadratures which contains all solutions of the equation.
    \end{enumerate}
\end{definition}

In the same way one defines other types of solvability of equations in differential algebra.

\begin{definition}
An extension $K\subset E$ is an {\it extension by generalized quadratures} if there is a chain (\ref{chain}) in which besides adjoining integrals and exponential of integrals, algebraic extensions are allowed.
\begin{enumerate}
\item An element $a\in F$ is {\it representable by generalized quadratures over $K$} if  there is an extension $K\subset E$ by generalized quadratures such that $a\in E$.
\item An equation over $K$ is {\it solvable by generalized quadratures over $K$} if there is an extension $K\subset E$ by generalized quadratures which contains all solutions of the equation.
\end{enumerate}
\end{definition}

Each extension $K_i\subset K_{i+1}$ in a chain (\ref{chain}) defining an extension  by quadratures, or by generalized quadratures is generated over $K_i$ by one element $y_i$. The element $y_i$ is either algebraic, or transcendental over $K_i$. If $y_i$ is algebraic, one can consider the extension $K_i\subset K_{i+1} $ as an algebraic equation. Otherwise, $y_i$ has to satisfy an equation $y'_i=\mathcal R(y_i)$, where $\mathcal R$ is a rational function over $K_i$ (see Lemma \ref{principle}).

\begin{definition}\label{special} An element $y$ is a {\it special transcendental element over  $K$} if $y$ is a transcendental element over $K$ and it satisfies a differential equation $y'=\mathcal R(y)$, where $\mathcal R$ is a rational function over $K$ which is divisible by $y$, i.e., is representable in the form $\mathcal R= \frac{yP}{Q}$, where the polynomials $yP$ and $Q$ are relatively prime.
\end{definition}

\begin{lemma} If an  element $y$ is transcendental over $K$ and either

\begin{enumerate}
\item $y$ is an exponential of integral over $K$,

\item $y$ is representable as $y=u^{-1}$, where $u$
is an integral over $K$,
\end{enumerate}
then $y$ is a special transcendental element over $K$.
\end{lemma}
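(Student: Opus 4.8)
The plan is to dispose of the two cases separately, in each case producing the required representation $\mathcal R = yP/Q$ by a one-line computation and then checking the divisibility-by-$y$ and coprimality conditions of Definition \ref{special}. Since $y$ is assumed transcendental in both cases, the only thing left to verify is that $y$ satisfies $y' = \mathcal R(y)$ with $\mathcal R$ of the prescribed special form.

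First I would treat case (1). By definition, $y$ being an exponential of an integral over $K$ means $y' = ay$ for some $a \in K$, which already exhibits $y'$ as the rational function $\mathcal R(y) = ay$ of $y$. Taking $P = a$ and $Q = 1$, we have $\mathcal R = yP/Q$ with $yP = ay$ and $Q = 1$; since $Q$ is a unit, $yP$ and $Q$ are trivially relatively prime, and $\mathcal R$ is manifestly divisible by $y$. I would note in passing that $a \neq 0$, for otherwise $y' = 0$ would make $y$ a constant and hence an element of $K$ (the field $K$ containing the whole field of constants), contradicting transcendence; but in fact the coprimality and divisibility conditions hold regardless, so this remark is only for clarity.

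Next I would treat case (2), where $y = u^{-1}$ with $u' = b$ for some $b \in K$. Differentiating $y = u^{-1}$ gives $y' = -u'u^{-2} = -by^2$, so $\mathcal R(y) = -by^2 = y\cdot(-by)$; taking $P = -by$ and $Q = 1$ we again obtain $\mathcal R = yP/Q$ with $yP = -by^2$ and $Q = 1$ relatively prime and $\mathcal R$ divisible by $y$. As before $b \neq 0$, since $b = 0$ would force $u$, and hence $y = u^{-1}$, to lie in $K$. In both cases $y$ is transcendental by hypothesis and satisfies $y' = \mathcal R(y)$ with $\mathcal R$ of the required form, so $y$ is a special transcendental element over $K$. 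There is really no obstacle here: the content of the lemma is the verification that these two classical constructions are instances of the new notion, and the proof is a direct computation, with the only mild bookkeeping being the coprimality check (immediate because $Q = 1$ in each case).
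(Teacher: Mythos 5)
Your proof is correct and follows essentially the same route as the paper: in case (1) the relation $y'=ay$ directly gives $\mathcal R(y)=ay$, and in case (2) differentiating $y=u^{-1}$ gives $y'=-by^2$, both visibly of the form $y P/Q$ with $Q=1$. The extra remarks about $a\neq 0$ and $b\neq 0$ are harmless but, as you note, unnecessary.
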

\begin{proof}
	\begin{enumerate}
	\item If $y'=ay$, then $y'=\mathcal R(y)$, where the rational function  $\mathcal R(y)=ay$ is divisible by $y$;
	\item if $u'=a$, then $y'=-ay^2$, i.e., $y'=\mathcal R(y)$, where the rational function  $\mathcal R(y)=-ay^2$ is divisible by $y$.
    \end{enumerate}
\end{proof}
The extension of $K$ by $u$ is identical to its extension by $u^{-1}$.

\begin{definition} An extension $K\subset E$ is {\it admissible} if there is a chain (\ref{chain}) such that each extension $K_i\subset K_{i+1}$ is obtained by adjoining to $K_i$ a special transcendental element over $K_i$.
\end{definition}

\begin{definition} An extension $K\subset E$ is a {\it generalized admissible extension} if there is a chain (\ref{chain}) such that each extension $K_i\subset K_{i+1}$ is either algebraic, or is obtained by adjoining to $K_i$ a special transcendental element over $K_i$.
\end{definition}

We announce the following theorem.

\begin{theorem}[On strong non solvability of linear differential equations]
	\label{an}
	A homogeneous linear differential equation over $K$ can be solved by  generalized admissible extensions if and only if it can be solved by generalized quadratures.

	In other words, if such an equation can not be solved by generalized quadratures, then no special transcendental extensions can help solve it either.
\end{theorem}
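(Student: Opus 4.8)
The plan is to prove the two implications separately. The implication from generalized quadratures to generalized admissible extensions is the elementary one. Given a chain \eqref{chain} realizing solvability by generalized quadratures, I would convert it step by step. Algebraic steps are already permitted. A step adjoining a transcendental exponential of an integral is a special transcendental extension by part (1) of the Lemma. A step $K_i \subset K_i(u)$ adjoining a transcendental integral $u$ (so $u' = a \in K_i$) I would replace using the identity $K_i(u) = K_i(u^{-1})$: by part (2) of the Lemma the element $u^{-1}$ is special transcendental, so the same field is reached by a special transcendental extension. Any integral or exponential of an integral that happens to be algebraic over $K_i$ contributes an algebraic step. Thus \eqref{chain} becomes a generalized admissible chain containing all solutions.

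For the converse I would isolate the following single-step statement as the core: \emph{if a homogeneous linear equation over a field $F$ is solvable by generalized quadratures over $F(t)$, where $t$ is a special transcendental element over $F$, then it is already solvable by generalized quadratures over $F$.} Granting this, the theorem follows by descent along a generalized admissible chain $K = K_0 \subset \dots \subset K_m$ containing all solutions (I treat the purely transcendental case, as in the announced weaker result, so every step is special transcendental). Since the solutions lie in $K_m$, the equation is trivially solvable by generalized quadratures over $K_m$. Applying the single-step statement to the top extension $K_{m-1} \subset K_{m-1}(t_m) = K_m$ shows it is solvable by generalized quadratures over $K_{m-1}$; repeating at each level lowers the index by one and, after $m$ applications, yields solvability by generalized quadratures over $K_0 = K$. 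Phrasing the single-step statement in terms of solvability over $F(t)$ — rather than merely having solutions in $F(t)$ — is exactly what makes this descent compose.

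Proving the single-step statement is the heart of the matter, and I would follow Liouville's method. First I would reduce from an equation of order $n$ to a first-order object: a nonzero solution $y$ has logarithmic derivative $w = y'/y$, which satisfies the associated Riccati equation, a first-order algebraic differential equation with coefficients in $F$. It suffices to produce one solution $w$ representable by generalized quadratures over $F$, for then $y$ is an exponential of an integral, reduction of order replaces the equation by one of order $n-1$, and an induction on $n$ finishes the statement. The decisive analysis is of a candidate solution $w$ of the Riccati equation lying in $F(t)$ (or in a generalized quadrature extension of it). I would examine the valuations of $F(t)$ over $F$, indexed by the monic irreducible polynomials in $t$ together with the place at infinity, and use the defining relation $t'/t = P(t)/Q(t)$, noting that coprimality of $tP$ and $Q$ forces $Q(0) \neq 0$, so $t'/t$ is regular at $t = 0$. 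Writing $w$ in partial fractions in $t$, differentiating, and substituting $t' = tP/Q$, I would compare the orders of poles and the residues on the two sides of the Riccati equation, whose coefficients are free of $t$. The residues of logarithmic derivatives are constants lying in the fixed field of constants; tracking them constrains the $t$-dependence of $w$ and forces a distinguished part of $w$ to descend to $F$ or to be obtainable from $F$ by a single quadrature.

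The main obstacle is precisely this descent for an arbitrary rational logarithmic derivative $P/Q$, rather than the two classical cases $t'/t = a$ (exponential of an integral) and $t'/t = -at$ (inverse of an integral) covered by the Lemma: one must control simultaneously the poles of $w$ at all places, the contributions produced by differentiating high-order partial-fraction terms through $t' = tP/Q$, and the possibility that $w$ lives not in $F(t)$ itself but in a generalized quadrature extension of it, which requires combining the valuation argument with the known structure of such extensions. This is also the stage at which admitting algebraic steps in the chain would force one to study the behaviour of the place structure under passage to algebraic extensions — the technically involved ingredient the authors defer — which is why the present argument is carried out for purely transcendental special extensions.
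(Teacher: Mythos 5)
First, a point of reference: the paper does not actually prove Theorem \ref{an}. It is announced, its proof is explicitly deferred to a subsequent publication (the authors state it requires the theory of algebraic curves over possibly very large algebraically closed fields), and what is proved here is only the weaker version in which every step of the chain is special transcendental and no algebraic steps occur. Your easy direction is correct and is exactly what the Lemma of Section 2 is designed for: a transcendental integral $u$ generates the same field as $u^{-1}$, which is special transcendental; a transcendental exponential of an integral is special transcendental; algebraic steps carry over verbatim. That part matches the paper's intent.

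The converse, however, contains a genuine gap, and you locate it yourself: the single-step descent lemma \emph{is} the theorem, and you give a program for it rather than a proof. Two concrete points. (i) By restricting to purely transcendental chains you are no longer addressing Theorem \ref{an} but the weaker result; descending solvability by generalized quadratures through an \emph{algebraic} step of a generalized admissible chain is precisely the ingredient the authors describe as unpolished and defer. (ii) Even for a single special transcendental step, your sketch (partial fractions, residues at all places of $F(t)$ over $F$, plus the complication that the solution may live only in a generalized quadrature extension of $F(t)$) is a Rosenlicht-style analysis that is not carried out, and it is heavier than what the transcendental case actually requires. The paper's mechanism is far more economical: divisibility of $\mathcal R$ by $y$ makes the single valuation $\ord$ at $y=0$ non-decreasing under differentiation (Lemma \ref{mainlemma}), so any solution of a Rosenlicht-type equation --- in particular of the generalized Riccati equation --- lying in $K\langle y\rangle$ has nonnegative order and can simply be evaluated at $y=0$ to yield a solution in the base field (Lemma \ref{rosen}); no residues and no other places are needed. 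Moreover, because the descended logarithmic derivative $p_1$ lands in $K$ itself, the reduction of order produces an operator again over $K$, so the induction on the order composes cleanly; in your scheme the Riccati solution is only representable by generalized quadratures over $F$, the reduced operator lives over an extension of $F$, and the induction, the passage from one solution to all solutions, and the treatment of solutions lying above $F(t)$ all remain to be supplied.
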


In the present paper, we prove a weaker version of Theorem \ref{an} which provides a criterion of solvability of homogeneous linear differential equations over $K$ by admissible extension. It implies the following:

\begin{theorem} If a homogeneous linear differential equation can be solved by admissible extensions, then it can be solved by a nested chain of extensions by integrals and by transcendental exponentials of integrals.
\end{theorem}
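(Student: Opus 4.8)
The plan is to argue by induction on the order $n$ of the equation $L(y)=y^{(n)}+a_1y^{(n-1)}+\dots+a_ny=0$, extracting at each stage a single solution whose logarithmic derivative lies in $K$ and then lowering the order. (The reverse implication is not asked for; it is anyway immediate from the Lemma, since a transcendental exponential of an integral is special transcendental and adjoining an integral coincides with adjoining its reciprocal.) For the setup, assume all solutions of $L$ lie in an admissible extension $E=K(t_0,\dots,t_{m-1})$, where $t_i$ is special transcendental over $K_i=K(t_0,\dots,t_{i-1})$ with $t_i'=t_iP_i(t_i)/Q_i(t_i)$ and $t_iP_i,Q_i$ coprime. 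Because the $t_i$ are successively transcendental, $E$ is a purely transcendental extension of $K$; hence $K$ is algebraically closed in $E$, a fact I will use to force each solution to be either in $K$ or transcendental over it. Writing $D$ for the derivation $a\mapsto a'$, the core of the argument is the following Liouville principle: if $L$ is solvable by an admissible extension of $K$, then the Riccati equation $\operatorname{Ric}(v)=0$ obtained by substituting $v=y'/y$ into $L(y)=0$ --- a polynomial differential equation with coefficients in $K$ whose unique top-degree monomial is $v^n$ --- has a solution $v_0\in K$.

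I would establish the principle by induction on the tower length $m$. For $m=0$ a nonzero solution lies in $K$ and its logarithmic derivative is the required $v_0$. For $m\ge 1$, first apply the principle over $K_1=K(t_0)$ to the shorter chain $K_1\subset\dots\subset E$, producing a Riccati solution $v\in K(t_0)$, and then descend it to $K$. The descent is exactly where the divisibility hypothesis is used: since $t_0'$ is divisible by $t_0$, one has $\ord_{t_0=0}(f')\ge\ord_{t_0=0}(f)$ for all $f$, so $t_0=0$ is a differential place and evaluation $\operatorname{ev}_0$ at $t_0=0$ is a differential homomorphism on the functions regular there. If $v$ had a pole of order $p\ge 1$ at $t_0=0$, then the top-degree monomial $v^n$ would have order $-np$, strictly below every other monomial (a monomial of total degree $d$ has order at least $-dp$, because differentiation does not lower the order and the $a_i$ are regular, and $v^n$ is the only monomial of degree $n$); its coefficient could therefore not cancel, which is impossible. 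Hence $v$ is regular at $t_0=0$, and applying $\operatorname{ev}_0$ to $\operatorname{Ric}(v)=0$ yields $\operatorname{Ric}(v_0)=0$ with $v_0:=v|_{t_0=0}\in K$. This descent is the heart of the matter and the step I expect to be the main obstacle; everything hinges on $\mathcal R$ being divisible by $y$.

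Granting $v_0\in K$, set $y_1=\exp\int v_0$; it solves $L$, so $y_1\in E$, and since $K$ is algebraically closed in $E$ it is either in $K$ or a transcendental exponential of an integral over $K$. In the Ore ring $K[D]$ the operator $L$ then factors on the right as $L=\widetilde L\circ(D-v_0)$ with $\widetilde L$ of order $n-1$ and coefficients in $K$, so the base field does not grow. The solution space of $\widetilde L$ equals $(D-v_0)$ applied to that of $L$ and hence still lies in $E$, so $\widetilde L$ is solvable by the same admissible extension and, by the induction hypothesis on the order, by a chain of integrals and transcendental exponentials of integrals. The solutions of $L$ are recovered from those $v$ of $\widetilde L$ through the integrating factor $y=y_1\int(v\,y_1^{-1})$, i.e. by adjoining $y_1$ and then the integrals of the elements $v\,y_1^{-1}$.

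As an algebraic integral in characteristic $0$ with fixed constants already lies in the base, the adjoined integrals are genuine (transcendental) integrals; the one remaining delicacy, which I would handle by ordering the construction and again invoking pure transcendence, is to be sure that adjoining $y_1$ does not silently introduce an algebraic extension. With that checked, the induction closes and $L$ is solved by integrals and transcendental exponentials of integrals.
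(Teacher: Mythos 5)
Your descent argument---the Rosenlicht-type top-degree estimate at $t_0=0$, made possible because divisibility of $\mathcal R$ by $y$ gives $\ord f'\geq \ord f$, followed by evaluation at $t_0=0$ and reduction of order via the logarithmic derivative $v_0$ so that $\widetilde L$ keeps its coefficients in $K$---is exactly the paper's route (Lemma \ref{mainlemma}, Lemma \ref{rosen}, Theorem \ref{firstmain}, and the operator factorization of Section 11). That part is sound. The gap is in the final assembly of the chain, precisely at the point you flag as a ``remaining delicacy'' and propose to handle ``by ordering the construction and again invoking pure transcendence.'' Two things go wrong there. First, in your recursion the tower $F_r$ built for $\widetilde L$ by the inductive hypothesis is \emph{not} contained in the admissible extension $E$ (it contains adjoined integrals that need not lie in $E$), so pure transcendence of $E$ over $K$ gives you no control over whether $y_1$ is algebraic over $F_r$; knowing $y_1$ is transcendental over $K$ does not prevent it from being algebraic over the larger field $F_r$ without belonging to it.

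Second, and more fundamentally, no reordering of the exponentials of integrals $z_1,\dots,z_n$ suffices in general. Even when every $z_i$ is individually transcendental over $K$, a monomial relation $z_1^{k_1}\cdots z_n^{k_n}\in K$ can make $z_j$ algebraic of degree $>1$ over $K\langle z_1,\dots,z_{j-1}\rangle$ for \emph{every} ordering (e.g.\ $z_2^2=cz_1$ with $c\in K$ and $z_1$ transcendental: neither order works, yet the single generator $z_2$ does, since $z_1=c^{-1}z_2^2$). The fix is not a permutation but a change of generating set: one must consider the multiplicative group $G$ generated by the $z_i$ modulo $G_0=G\cap K^*$, show that $G/G_0$ is torsion free because all $z_i$ lie in the pure transcendental extension $E$, and then lift a basis of $G/G_0$ to new exponentials of integrals that are algebraically independent over $K$ by the no-monomial-relation criterion. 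This is the content of Theorems \ref{exp} and \ref{factor}, which occupy two sections of the paper and which your proposal does not supply; the paper also adjoins all the (re-chosen) exponentials of integrals first and only then the integrals, avoiding your interleaving problem altogether.
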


\section{An illustrative example}

In this section,  we present a simple example which illustrates  our approach  and  Liouville's method.

Let $K$ be a differential field of meromorphic functions on some connected domain on a complex line which is closed under differentiation.

Consider  the second order homogeneous equation
\begin{equation}
\label{I.2.eq0}
y''+ay'+by=0
\end{equation}
over $K$, i.e.,  $a=a(x)$ and $b=b(x)$ are functions from $K$. It is well known that {\it $y$ is a nonzero solution of (\ref {I.2.eq0}) if and only if its logarithmic derivative $u=y'/y$ satisfies the Riccati equation}
\begin{equation}
\label{I.eq1}
u'+ a u+b+u^2=0.
\end{equation}

Thus, problems of solving in finite terms equations (\ref{I.2.eq0}) and (\ref {I.eq1}) are closely related, and for solvability by quadratures are in fact equivalent. If one wants to show that the equation (\ref{I.2.eq0}) is not solvable by quadratures, one can try to show it instead for equation (\ref{I.eq1}). Let us demonstrate how one can make one step in proving non-solvability by quadratures of the equation (\ref{I.eq1}).

Assume that there are no solutions of the Riccati equation (\ref {I.eq1}) in the differential field $K$. Let us show that an addition to the field $K$, say, an exponent $y $ can not help in solving (\ref {I.eq1}). Our goal is to prove the following theorem.

\begin{theorem}
\label{I.2}
If an exponent $y$ (i.e., a  solution of the equation $y'=y$) is transcendental over $K$, and the Riccati equation (\ref{I.eq1}) has a solution in the extension $K\langle y \rangle$ of the differential field $K$ by $y$ then it has a solution in $K$.
\end{theorem}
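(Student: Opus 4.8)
The plan is to exploit the very special structure of the derivation on $K\langle y\rangle$. Since $y$ is transcendental over $K$ with $y'=y$, we have $K\langle y\rangle=K(y)$, and the derivation decomposes as $D=\tilde\partial+y\,\partial_y$, where $\partial_y$ is the usual formal derivative in $y$ (killing $K$) and $\tilde\partial$ is the ``coefficient derivative'' acting by $'$ on $K$ and killing $y$. The point $y=0$ is distinguished: $D$ does not lower the order of vanishing there, and, as I will check first, evaluation at $y=0$ intertwines $D$ with the derivation of $K$. Concretely I would record the identity $\mathrm{ev}_0(Df)=(\mathrm{ev}_0 f)'$ for every $f\in K(y)$ regular at $y=0$; it is immediate from $D=\tilde\partial+y\,\partial_y$, since the factor $y$ annihilates the second summand upon setting $y=0$, while $\tilde\partial$ visibly commutes with $\mathrm{ev}_0$ (differentiating the coefficients of $f$ and then reading off the $y^0$-term agrees with differentiating the $y^0$-term).

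Next I would write a hypothetical solution as a rational function $u=R(y)\in K(y)$ and insert it into the Riccati equation, obtaining the identity $DR+aR+b+R^2=0$ in $K(y)$. The crux is to show that $R$ has no pole at $y=0$. For this I use the valuation $v_0$ with $v_0(y)=1$ and $v_0$ trivial on $K^\ast$, whose residue field is $K$. A short local computation gives $v_0(DR)\ge v_0(R)$, while trivially $v_0(aR)\ge v_0(R)$ and $v_0(b)\ge 0$. Hence, were $R$ to have a pole of order $p\ge 1$ at $y=0$, then $v_0(R^2)=-2p$ (no cancellation, as the leading coefficient squares to something nonzero in characteristic $0$), whereas $v_0(DR+aR+b)\ge -p>-2p$; this contradicts $R^2=-(DR+aR+b)$. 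Therefore $R$ is regular at $y=0$.

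Finally, applying $\mathrm{ev}_0$ to $DR+aR+b+R^2=0$ and using the intertwining relation yields $(\mathrm{ev}_0 R)'+a\,\mathrm{ev}_0 R+b+(\mathrm{ev}_0 R)^2=0$, so $u_0:=R(0)\in K$ is a solution of the Riccati equation lying in $K$, which is exactly the assertion of the theorem. (Symmetrically one could run the argument at the place $y=\infty$ via $w=1/y$, $w'=-w$.) Note that the poles of $R$ at points $y=\beta\neq 0$, if any, play no role: they are harmless when evaluating at $y=0$.

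I expect the main obstacle to be precisely the local pole analysis at the distinguished point: verifying carefully that $D$ cannot decrease $v_0$ and that no cancellation rescues a would-be pole of $R$ at $y=0$. Once this estimate is in hand, the evaluation step is essentially automatic. It is also worth isolating \emph{why} the argument succeeds: the equation is quadratic, so a pole of order $p$ forces in $R^2$ a pole of order $2p$ that strictly dominates anything the first-order operator $D$ can produce, and $y=0$ is exactly the place that the exponential $y$ leaves invariant under $D$.
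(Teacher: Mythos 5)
Your proposal is correct and follows essentially the same route as the paper: the decomposition $D=\tilde\partial+y\,\partial_y$ is the paper's formula $R'=R'_x+yR'_y$, your valuation estimate $v_0(DR)\ge v_0(R)$ is the paper's Lemma on orders at $y=0$, and the dominance of the quadratic term followed by evaluation at $y=0$ is exactly the paper's concluding argument. The only cosmetic difference is that you phrase the order at $y=0$ as a valuation and state the evaluation--derivation intertwining explicitly (which the paper relegates to a remark).
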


\begin{remark} Liouville's theory of integrability in finite terms has the following slogan:

{\it If a ``simple" equation has a solution given by an ``allowed"  formula, then it has to have (another) solution given by a ``simple" allowed formula}.

Theorem \ref{I.2} demonstrates this slogan: here a simple equation is the equation (\ref{I.eq1}), an allowed formula represents a solution as an element of $K\langle y\rangle$ and a simple allowed formula represents a solution as an element of $K$.
\end{remark}

Let us prove the first two lemmas.
\begin{lemma}[Liouville's principle for a transcendental exponent]
\label{lemma5}
An element $a \in K\langle y \rangle$ is representable in a unique way as a rational function in $y$ over $K$. The derivative $R'$ of $R=R(y)$ is given by
\begin{equation}\label{liu}
R' = R'_x + R'_y y,
\end{equation}
where $R'_x=\frac{\partial}{\partial x} R$,  $R'_y=\frac{\partial}{\partial y} R$ and $\frac{\partial}{\partial x}$, $\frac{\partial}{\partial y}$ are the following differentiations    of the field $K(y)$:
\begin{enumerate}
		\item $\frac{\partial}{\partial x}$ sends $y$ to  zero  and  coincides with the differentiation in $K$ on coefficients;
		\item $\frac{\partial}{\partial y}$ sends $y$ to 1 and sends to zero all elements of the differential field $K$.
\end{enumerate}
\end{lemma}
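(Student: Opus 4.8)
The plan is to prove the two assertions in turn, with the transcendence of $y$ doing the essential work in both. For the first assertion, I would begin by observing that since $y$ is transcendental over $K$, the subfield $K(y)\subset K\langle y\rangle$ is $K$-isomorphic to the field of rational functions in one formal variable, so every element of $K(y)$ has a unique expression as a reduced rational function $P(y)/Q(y)$ with $P,Q\in K[y]$; this is the uniqueness claim. It then remains to check $K\langle y\rangle=K(y)$, i.e.\ that $K(y)$ is already closed under the derivation of $K\langle y\rangle$. This follows from $y'=y\in K(y)$: differentiating a polynomial $\sum a_i y^i$ gives $\sum\bigl(a_i'y^i+a_i\,i\,y^{i-1}y'\bigr)=\sum(a_i'+ia_i)y^i\in K(y)$, and the quotient rule propagates closure to all of $K(y)$.

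For the derivative formula, the cleanest route is to treat $\partial/\partial x$ and $\partial/\partial y$ as the two derivations of $K(y)$ that they are. Here transcendence is again what guarantees they are well defined: a derivation of $K[y]$ is freely determined by its restriction to $K$ together with its value on $y$, precisely because there are no polynomial relations on $y$, and it extends uniquely to $K(y)=\operatorname{Frac}K[y]$ by the quotient rule. Thus $\partial/\partial x$ (the coefficientwise extension of the derivation of $K$, sending $y$ to $0$) and $\partial/\partial y$ (the formal derivative in $y$, sending all of $K$ to $0$) are genuine derivations, and hence so is $\partial/\partial x+y\,\partial/\partial y$.

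The core step is then to show that the differential-field derivation $D$ on $K\langle y\rangle=K(y)$ coincides with $\partial/\partial x+y\,\partial/\partial y$. Since both sides are derivations of the field $K(y)$, and two derivations that agree on a generating set agree everywhere (again by the Leibniz and quotient rules), it suffices to compare them on the generators $K\cup\{y\}$. On $a\in K$ we have $D(a)=a'$ while $(\partial/\partial x+y\,\partial/\partial y)(a)=a'+y\cdot 0=a'$; on $y$ we have $D(y)=y'=y$ while $(\partial/\partial x+y\,\partial/\partial y)(y)=0+y\cdot 1=y$. Agreement on the generators forces $D=\partial/\partial x+y\,\partial/\partial y$, which is exactly (\ref{liu}).

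I expect no serious obstacle here: the lemma is foundational and the computations are routine. The one point that must not be glossed over --- and where the hypothesis is genuinely used --- is the transcendence of $y$, which simultaneously yields uniqueness of the rational representation and the well-definedness of the formal derivations $\partial/\partial x$ and $\partial/\partial y$. Alternatively, one could bypass the derivation-theoretic argument and verify (\ref{liu}) directly on polynomials via $(y^i)'=iy^i$ and then extend by the quotient rule, but the generator-comparison argument is shorter and makes the role of transcendence transparent.
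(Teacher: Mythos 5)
Your proposal is correct and follows essentially the same route as the paper: representability follows because $y'=y$ makes $K(y)$ closed under the derivation, uniqueness follows from transcendence of $y$, and the formula (\ref{liu}) is then a routine verification. The only cosmetic difference is that the paper verifies (\ref{liu}) by direct computation (``the chain rule and the identity $y'=y$''), whereas you verify it by noting that two derivations of $K(y)$ agreeing on the generators $K\cup\{y\}$ agree everywhere --- the same check, packaged slightly more structurally.
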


\begin{proof} Indeed, since $y'=y$, any element of the differential field  $K\langle y \rangle$  is representable as a rational function in $y$ over $K$. If an element $a$ can be represented as a rational function in $y$ in two different ways, i.e., $a=R_1(y); \, \, a=R_2(y)$, then $R_1(y)=R_2(y)$ which means that $y$ is algebraic over $K$ and contradicts the assumption. One can check (\ref{liu}) using the chain rule and the identity $y'=y$.
\end{proof}

\begin{remark}\label{rem3.2} If $y$ is algebraic element over $K$ then for different rational functions $R_1, R_2$ the elements $R_1(y)$ and $R_2(y)$ could be equal. For some rational functions $R$ the element $R(y)$ is not defined. Assume that an element $y$ (either transcendental or algebraic over $K$) satisfies the equation $y'=y$ and for some $R$ the element $a=R(y)$ is defined then for its derivative $a'$ the following relation holds $a' = R'_x(y) + y R'_y (y)$.
\end{remark}

According to Lemma \ref{lemma5}, the field $K\langle y\rangle$ is isomorphic to the field $K(y)$ of rational functions over $K$ with the differentiation given by (\ref{liu}). In the field of rational functions $K(y)$, the order of a rational function at the point $y=0$ is defined.

\begin{definition}\label{order} A rational function $R(y)$ over $K$ has order $m$ at the point $y=0$ if it is representable as $R=y^mR_0$, where $R_0$ is a rational function which is not divisible by any positive or negative degree of $y$.
\end{definition}

Using (\ref{liu}), one can check the following lemma.

\begin{lemma}
\label{lemma7}
If a rational function $R$ has order $m$ at the point $y=0$, then its derivative $R'= R'_x+R'_y y$ has order at $y=0$ not smaller than $m$.
\end{lemma}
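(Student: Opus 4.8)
The plan is to write $R = y^m R_0$ where $R_0$ has order $0$ at $y=0$, meaning $R_0$ is a ratio of polynomials in $y$ each having nonzero constant term, and then apply the differentiation formula \eqref{liu} to see that no term of negative order can appear in $R'$. First I would compute $R'$ directly using the Leibniz rule together with $R' = R'_x + R'_y y$. Since $R = y^m R_0$, the $\partial_y$ derivative contributes $R'_y = m y^{m-1} R_0 + y^m (R_0)'_y$, so that $R'_y y = m y^m R_0 + y^{m+1}(R_0)'_y$, and the $\partial_x$ part gives $R'_x = y^m (R_0)'_x$ because $\partial_x$ annihilates $y$ and acts only on the $K$-coefficients of $R_0$, leaving the factor $y^m$ untouched.

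Next I would observe that both $(R_0)'_x$ and $(R_0)'_y$ have order at least $0$ at $y=0$. For $(R_0)'_x$ this is because differentiating the $K$-coefficients of a ratio of polynomials with nonzero constant terms cannot introduce a pole or a zero at $y=0$: the denominator is unchanged up to squaring and still has nonzero constant term, so the order stays $\geq 0$. For $(R_0)'_y$, the formal $y$-derivative of a rational function of order $0$ has order $\geq -1$ in general, but here it is multiplied by the extra factor $y^{m+1}$ coming from $R'_y y$, so the contribution $y^{m+1}(R_0)'_y$ has order $\geq m$. Assembling the three pieces, $R' = y^m\big((R_0)'_x + m R_0 + y (R_0)'_y\big)$, and each summand inside the parentheses has order $\geq 0$ at $y=0$ (the middle term $mR_0$ has order exactly $0$, and the outer two have order $\geq 0$), so $R'$ has order $\geq m$, as claimed.

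The main subtlety I would watch for is the behavior of $(R_0)'_y$ near $y=0$, since the naive $y$-derivative of a function with a nonzero constant term can drop the order from $0$ to $-1$; the point is that this drop is exactly compensated by the surplus factor of $y$ that accompanies $R'_y$ in the formula \eqref{liu}. A clean way to make this airtight is to reduce to the case of a single power $y^k$ with $k \in \mathbb{Z}$: writing $R_0$ as a $K$-linear combination (after partial fractions, or just directly using the monomial basis of the Laurent-type expansion) of terms whose lowest $y$-degree is $0$, one checks the order inequality termwise and then uses that the order of a sum is at least the minimum of the orders. Either the termwise monomial computation or the direct product-rule computation above suffices; I would present the direct computation since it avoids invoking partial fractions and keeps the argument self-contained.
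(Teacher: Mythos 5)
Your proposal is correct and follows essentially the same route as the paper's proof: write $R=y^mR_0$, expand $R'=my^mR_0+y^{m+1}(R_0)'_y+y^m(R_0)'_x$, and check that the partial derivatives of $R_0=P/Q$ (with $Q$ not divisible by $y$) contribute no negative-order terms. The only cosmetic difference is that the paper verifies $\ord (R_0)'_y\geq 0$ and $\ord (R_0)'_x\geq 0$ explicitly via the quotient rule, whereas you allow the slightly weaker bound $\ord (R_0)'_y\geq -1$ and let the surplus factor of $y$ absorb it, which is equally valid.
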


\begin{proof} If $R=y^mR_0$, then $$R'=my^mR_0 +y^{m+1}(R_0)'_y + y^m (R_0)'_x.$$
 The function $R_0$ equals to $P/Q$, where $Q$ is not divisible by $y$. Thus, $(R_0)'_y$ and $(R_0)'_x$ are equal to $(P'_yQ-PQ'_y)/Q^2$ and $(P'_xQ-PQ'_x)/Q^2$, where polynomial $Q^2$ is not divisible by $y$. So $(R_0)'_y$ and $(R_0)'_x$ have nonnegative order which implies Lemma \ref{lemma7}.
\end{proof}

\begin{proof}[Proof of Theorem \ref{I.2}] Assume that $u=R(y)$ satisfies (\ref{I.eq1}). If we plug in to the rational function $R$ any solution $y$ of the equation $y'=y$, then we still will get a solution of (\ref {I.eq1}) under assumption that the plugging in of $y$ to $R$ is well defined (see Remark \ref{rem3.2}).

Let us try to plug in to $R$ the solution $y\equiv 0$. If we succeed, then we will have a solution $R(0)$ of the Riccati equation (\ref {I.eq1}) in the field $K$. One can do it only if the order of the rational function $R(y)$ at the point $y=0$ is nonnegative. If $R$ has a pole at the point $y=0$, then plugging $y= 0$ in to $R$ makes no sense.

Let us show that if $u=R(y)$ has a negative order $k$ at $y=0$, then it can not satisfy equation (\ref {I.eq1}). Indeed, if $u$ has an order $k<0$, then $u^2$ has an order $2k<0$. The terms $u$ and $u'$ in (\ref {I.eq1}) have orders $\geq k$. So the term $u^2$ can not be cancelled and Riccati's equation can not be satisfied. Thus, if $u=R$ satisfies (\ref {I.eq1}), it can not have negative order at $y=0$.

The theorem is proven.
\end{proof}

\section{Generalized Riccati equation}

In this section, we recall  classical results on the generalized Riccati equation (the presentation  is borrowed from \cite{Kho18}). Let $y$ be a a nonzero element of a differential field and let $u$ be its  logarithmic derivative, i.e., $y'=uy$.

\begin{definition} Let $D_n$ be a polynomial in $u$ and in its derivatives $u', \dots, u^{(n-1)}$ up to order $(n - 1)$ defined inductively by the following conditions:
$$D_0=1; D_{k+1}=\frac{d D_k}{d x} + uD_k.$$
\end{definition}

\begin{lemma}
 \begin{enumerate}
 \item The polynomial $D_n$ has integral coefficients and $\deg D_n =n$. The degree $n$ homogeneous part of $D_n$ equals to $u^n$ (i.e., $D_n =u^n + \tilde D_n$, where $\deg \tilde D_n < n$).
	
\item  If $y$ is a function whose logarithmic derivative is equal to $u$ (i.e., if $y'=uy$), then, for any $n\geq 0$, we have the relation $y^{(n)} = D_n(u)y$.
    \end{enumerate}
\end{lemma}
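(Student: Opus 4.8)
The plan is to prove both assertions by induction on $n$, using only the defining recursion $D_0 = 1$, $D_{k+1} = \frac{dD_k}{dx} + uD_k$, where $\frac{d}{dx}$ denotes the total derivative obtained by treating each $u^{(j)}$ as a differentiable element with $(u^{(j)})' = u^{(j+1)}$.

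I would dispatch the second statement first, since it is the most direct. For $n=0$ the claim reads $y = D_0(u)\,y = y$, which holds. Assuming $y^{(n)} = D_n(u)\,y$, I would differentiate, apply the Leibniz rule, and then substitute $y' = uy$:
\begin{equation*}
y^{(n+1)} = \frac{d}{dx}\bigl(D_n(u)\,y\bigr) = \frac{dD_n}{dx}\,y + D_n(u)\,y' = \frac{dD_n}{dx}\,y + u\,D_n(u)\,y = \Bigl(\frac{dD_n}{dx} + uD_n\Bigr)y = D_{n+1}(u)\,y,
\end{equation*}
which is exactly the recursion. So the inductive step is immediate.

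For the first statement I would first fix the meaning of $\deg$: regard $u, u', u'', \dots$ as independent variables, each of total degree $1$, so that $\deg$ is the ordinary total degree of a polynomial in these variables. (This is the right convention: for instance $D_2 = u^2 + u'$ has degree $2$ and remainder $u'$ of degree $1$, whereas a weighted degree would make $u'$ and $u^2$ equal in weight.) The base case $D_0 = 1$ has integer coefficients and degree $0$. For the inductive step I would record two facts about the operations in the recursion. First, multiplication by $u$ sends a monomial of degree $d$ to one of degree $d+1$ and preserves integrality of coefficients. Second, the total derivative preserves total degree monomial by monomial and preserves integer coefficients, because $\frac{d}{dx}\prod_j (u^{(j)})^{a_j}$ is a sum of terms each obtained by replacing one factor $u^{(j)}$ by $u^{(j+1)}$, lowering the power of one variable by one while raising the power of another by one, with coefficient the integer $a_j$. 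Granting the inductive hypothesis $D_n = u^n + \tilde D_n$ with $\deg \tilde D_n < n$ and integer coefficients, the summand $\frac{dD_n}{dx}$ then has degree $\le n$, while $uD_n$ has degree $n+1$ with top-degree part $u\cdot u^n = u^{n+1}$. Hence $D_{n+1} = \frac{dD_n}{dx} + uD_n$ has degree exactly $n+1$, its degree-$(n+1)$ homogeneous part is $u^{n+1}$, and all its coefficients are integers, completing the induction.

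There is no genuine obstacle in this lemma; the only point demanding care is the degree bookkeeping, namely the verification that the total derivative does not raise total degree while multiplication by $u$ raises it by exactly one. This is precisely what forces the top-degree part of $D_{n+1}$ to be carried solely by the $uD_n$ summand and to equal $u^{n+1}$.
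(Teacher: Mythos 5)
Your proof is correct and is exactly the induction the paper has in mind (the paper simply states that "both claims can be easily checked by induction" without writing it out). Your care with the degree convention --- treating $u, u', u'', \dots$ as independent variables each of total degree $1$, so that $\frac{d}{dx}$ preserves total degree while multiplication by $u$ raises it by one --- is precisely the bookkeeping needed to see that the top homogeneous part of $D_{n+1}$ is $u^{n+1}$.
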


Both claims of the lemma can be easily checked by induction.

Consider a homogeneous linear differential equation whose coefficients $a_i$ belong to a differential field $K$:
\begin{equation}
\label{eq7}
 y^{(n)}+a_1y^{(n-1)}+\dots +a_n y=0.
\end{equation}

\begin{definition} The equation
\begin{equation}
\label{ric}
D_n + a_1 D_{n-1} +\dots+ a_n D_0 = 0
\end{equation}
of order $(n-1)$ is called the {\it generalized Riccati equation} for the homogeneous linear differential equation
(\ref{eq7}).
\end{definition}

\begin{lemma}
 \label{lemma9}
A nonzero element $y$ satisfies the linear differential equation (\ref{eq7}) if and only if its logarithmic derivative $u=y'/y$ satisfies the corresponding generalized Riccati equation (\ref{ric}).
 \end{lemma}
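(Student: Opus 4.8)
The plan is to deduce the equivalence directly from the factorization supplied by the preceding lemma, so that essentially no new computation is needed. First I would observe that because $u = y'/y$ is by definition the logarithmic derivative of the given nonzero element $y$, we have $y' = uy$, which is exactly the hypothesis of the second claim of the preceding lemma. That claim then yields the identities $y^{(k)} = D_k(u)\,y$ for every $k \ge 0$ simultaneously; in particular $y^{(n)} = D_n\,y$, $y^{(n-1)} = D_{n-1}\,y$, down to $y = D_0\,y$.

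Next I would substitute these expressions termwise into the left-hand side of the linear equation (\ref{eq7}) and pull out the common factor $y$, obtaining
$$y^{(n)} + a_1 y^{(n-1)} + \dots + a_n y = \left( D_n + a_1 D_{n-1} + \dots + a_n D_0 \right) y.$$
This single identity carries all the content of the lemma.

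Finally, since $y$ is nonzero and we work inside a field, which has no zero divisors, the product on the right vanishes if and only if its first factor vanishes. The vanishing of the left-hand side is precisely the statement that $y$ satisfies (\ref{eq7}), and the vanishing of the parenthesized factor is precisely the generalized Riccati equation (\ref{ric}); hence both implications hold at once. There is no genuine obstacle here: the only point needing a word of care is that the preceding lemma be invoked as an identity valid for all orders $k$ under the sole assumption $y' = uy$, after which the result is pure substitution followed by cancellation of the nonzero $y$.
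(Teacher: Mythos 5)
Your argument is correct and is essentially the paper's own proof: both rely on the identity $y^{(k)} = D_k(u)\,y$ from the preceding lemma and then divide by (or factor out) the nonzero $y$. Your packaging of the two directions into a single identity followed by cancellation is just a slightly more compact phrasing of the same computation.
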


\begin{proof} For proving Lemma \ref{lemma9} in one direction, one can divide (\ref{eq7}) by $y$ and use the identity $y^{(k)}/y=D_k(u)$.
	
In the other direction, if $u$ is a solution of (\ref{ric}), then multiply (\ref{ric}) by $y$ and use the identity $y^{k} = D_k(u)y$. We obtain that $y$ is a nonzero solution of (\ref{eq7}).
\end{proof}

\section{Reduction of order}

In this section, we recall the classical procedure of order reduction for homogeneous linear differential equations (the presentation  is borrowed from \cite{top}).

\subsection{Division with a remainder of differential operators}

{\em A linear differential operator of order $n$} over a differential field $K$ is an operator $L= a_nD^n+\dots +a_0$, where $a_i\in K$ and $a_n\neq 0$, acting on an element $y$ of any differential field $F$ containing $k$  by the formula
$$
L(y)=a_ny^{(n)}+\dots +a_0y.
$$
For operators $L_1$ and $L_2$ over $K$, their product $L=L_1\circ L_2=L_1(L_2)$ is also an operator over $K$. The multiplication of operators is, in general, noncommutative, but the leading term  of the operator $L=L_1\circ L_2$ is equal to the product of the leading terms of the operators $L_1$ and $L_2$. For operators $L$ and $L_2$ of orders $n$ and $k$ over $K$,
there exist unique operators $L_1$ and $R$ over $K$ such that $L=L_1\circ L_2+R$, and the order of $R$ is strictly less than $k$. The operator $R$ is called {\em the remainder in the right division} of the operator $L$ by the operator $L_2$.

The operators $L_1$ and $R$ can be constructed explicitly: the algorithm for the division of operators with a remainder is based on the formula for the leading term of the product and is very similar to the algorithm of the division with a remainder for polynomials in one variable.

\subsection{Procedure of order reduction}
Let $L$ be a linear differential operator over $K$, $y_1$ a nonzero element of the field $K$, $p=\frac{y_1'}{y_1}$ its logarithmic derivative and $L_2= D-p$ the first order operator annihilating $y_1$.

\begin{lemma} The operator $L$ is divisible by $L_2$ from the right if and only if the element $y_1$ satisfies the identity $L(y_1)\equiv 0$.
\end{lemma}
\begin{proof} The remainder $R$ in the right division of $L$ by $L_2$ is the operator of multiplication by $c_0$, where $c_0=\frac{1}{y_1}L(y_1)$. Indeed, the desired equality can be obtained by plugging in $y=y_1$ to the identity $L(y)\equiv L_1\circ L_2(y)+c_0y$.
\end{proof}

Using a nonzero solution $y_1$ of the equation $L(y)=0$ of order $n$, one can reduce the order of this equation. To this end, one needs to represent the operator $L$ in the form $L=L_1\circ L_2$, where $L_1$ is an operator of order $(n-1)$.

{\em Coefficients of the operator $L_1$ lie in an extension of the differential field $K$ obtained by adjoining the logarithmic derivative $p$ of the element $y_1$}. If one knows some solution $u$ of the equation $L_1(u)=0$, then one can construct a certain solution $y$ of the initial equation $L(y)=0$. To do that, it suffices to solve the equation $L_2(y)=y'-py=u$.

\begin{lemma}\label{simple} An element $y$ satisfies the equation $L_2(y)=u$ if and only if $y = y_1 z$, where $z$ satisfies the equation $z' = u / y_1$.
\end{lemma}

\begin{proof} The operator $L_2(y)=y'-\frac{y_1'}{y_1}y$ can be rewritten in the form $L_2(y)= y_1(\frac{y}{y_1})'$. \end{proof}

The procedure just described is called the
\index{reduction of order}
{\em reduction of order} of a differential equation.

\medskip

\noindent{\sc Remark 1.}
An operator annihilating $y_1$ is defined up to a factor, which can be an arbitrary element of the field $K$, and the procedure of the order reduction depends on the choice of this element. It is easier to divide $L$ from the right by the operator $\tilde L_2=D\circ y_1^{-1}$ and represent $L$ in the form $L=\tilde L_1  \tilde L_2$ which reduces the initial equation $L(y)=0$ to the equation $\tilde L_1(u)=0$ of smaller order. Usually, this very procedure of order reduction is given in textbooks on differential equations. Note that the coefficients of the operator $\tilde L_1$ lie in the extension of the differential field $K$ obtained by adjoining the element $y_1$ itself, rather than its logarithmic derivative $p$. This makes the operator $\tilde L_1$ inconvenient for the purposes of our paper.



\section{Pure transcendental extensions}
A field extension $K\subset F$ is {\it pure transcendental} if any element  $y\in F$, which is algebraic over $K$, belongs to $K$. Let us start with the following example. Let $y$ be an integral over $K$, i.e., $y'=a$ for some $a\in K$.

\begin{lemma}\label{integral}
The integral $y$ either belongs to the field $K$ or is transcendental over $K$.
\end{lemma}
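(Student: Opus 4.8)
The plan is to argue by the contrapositive on algebraicity: I assume $y$ is algebraic over $K$ and show it must already lie in $K$. Let $n=\deg_K y$ and let
$$
P(T)=T^n+b_{n-1}T^{n-1}+\dots+b_0,\qquad b_i\in K,
$$
be the monic minimal polynomial of $y$, so that $y^n+b_{n-1}y^{n-1}+\dots+b_0=0$. The governing idea, in Liouville's spirit, is that differentiating this algebraic relation produces a second polynomial relation satisfied by $y$, but of strictly smaller degree, which minimality then forces to be trivial.

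First I would differentiate the relation $P(y)=0$, using the Leibniz rule together with the hypothesis $y'=a\in K$. Since the leading coefficient $b_n=1$ is a constant, the top-degree term coming from differentiating the coefficients drops out, while the substitution $y'=a$ turns each $\frac{d}{dx}y^i$ into $iay^{i-1}$. The outcome is a polynomial in $y$ of degree at most $n-1$ whose coefficient of $y^{n-1}$ is exactly $b_{n-1}'+na$.

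Next I would invoke minimality: any polynomial over $K$ of degree less than $n$ that annihilates $y$ must be identically zero. Hence every coefficient of the differentiated relation vanishes; in particular $b_{n-1}'+na=0$, which pins down $a=-\frac{1}{n}b_{n-1}'$, where characteristic $0$ is used to divide by $n$. Combining this with $y'=a$ gives $\bigl(y+\frac{1}{n}b_{n-1}\bigr)'=0$, so $y+\frac{1}{n}b_{n-1}$ is a constant; by the standing convention that all fields share the same subfield of constants, this constant $c$ lies in $K$, and therefore $y=c-\frac{1}{n}b_{n-1}\in K$.

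I do not expect a serious obstacle here. The only two points needing care are, first, that the differentiated relation genuinely drops below degree $n$ (so that minimality can be applied) — this is guaranteed by the monicity of $P$, which kills the would-be $y^n$ term — and second, that the constant produced really lands in $K$ rather than in some larger field of constants, which is exactly what the shared-constants hypothesis provides. Both are structural rather than computational, so the argument should remain short and self-contained.
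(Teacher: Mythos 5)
Your proof is correct and follows essentially the same route as the paper: differentiate the monic minimal polynomial of $y$, observe that the coefficient of $y^{n-1}$ in the resulting relation is $na+b_{n-1}'$, and use the shared-constants convention to place $y$ in $K$. The only difference is organizational — the paper runs a contradiction (if that coefficient vanished, $y\in K$; otherwise minimality is violated), whereas you use minimality first to force all coefficients of the differentiated relation to vanish and then conclude directly — which is the same argument in mirror image.
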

\begin{proof} Assume that an integral $y$ of an element $a\in K$  is algebraic over $K$ but does not belong to $K$.  Let $Q$ be a  polynomial over $K$ of the smallest possible degree $n>1$  such that $Q(y)=a_ny^n\dots +a_0=0$.
We can assume   that $a_n=1$. Differentiating the identity $Q(y)=0$, we obtain the equation
\begin{equation}\label{int}
(na +a'_{n-1} )y^{n-1}+\dots+(a_1a+a'_0)=0
\end{equation}
 The coefficient $n a+a'_{n-1}$ is not equal to zero. Indeed, otherwise the derivative of an element $-\frac{a_{n-1}}{n}$ is equal to $a$. It means that $y$ belongs to $K$, since $y=-\frac{a_{n-1}}{n} +C$ for some constant $C$, and  all constants belong to $K$. The equation (\ref{int}) for $y$ has degree $n-1<n$. The contradiction proves that $y$ is not algebraic over $K$.
\qed\end{proof}

Lemma \ref{integral} implies that any nontrivial extension by adjoining an integral is pure transcendental. Let us describe all pure transcendental  extensions of transcendental degree one.

Let $K\subset F$ be a nested pair of differential fields. Assume that $y\in F$ is transcendental over $K$ and the algebraic field $K(y)\subset F$ generated by $y$ and $K$ is closed under differentiation, in particular, $y'$ belongs to the  algebraic field $K(y)$, i.e., $y'=R(y)$, where $R$ is some rational function over $K$.

Liouville's principle shows that for any $R\in K(y)$ there is a pure transcendental extension $K\subset K\langle y\rangle$ in which the identity $y'=R(y)$ holds and the rational function $R$ together with  differentiation in the field $K$ completely determine the differential field $K\langle y\rangle$.

\begin{lemma} \label{difrat} Let $K$ be a differential field, and $K(t)\supset K$ the field of rational functions over $K$ in indeterminate $t$. Then for any choice of rational function $\mathcal{R} \in K(t)$, there exists a unique differentiation on the field $K(t)\supset K$ which coincides with the differentiation on the subfield $K$ and is equal to $\mathcal R(t)$ on $t$.
\end{lemma}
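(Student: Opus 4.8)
The plan is to establish uniqueness first, since it forces the shape of any admissible derivation and thereby also dictates the formula to write down for existence; then to produce the required derivation explicitly as a combination of two auxiliary derivations. The only genuine point requiring care — extending a derivation from the polynomial ring $K[t]$ to its field of fractions $K(t)$ — I would dispatch by the standard quotient-rule extension, which is where the sole nontrivial verification lives.

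For uniqueness, suppose $D_1$ and $D_2$ are two derivations on $K(t)$ that restrict to the given derivation on $K$ and satisfy $D_i(t) = \mathcal{R}(t)$. Their difference $\delta = D_1 - D_2$ is again a derivation (from additivity and the Leibniz rule one gets $\delta(ab) = \delta(a)b + a\,\delta(b)$), and it vanishes on $K$ and on $t$. I would then observe that a derivation vanishing on $K \cup \{t\}$ vanishes on all of $K(t)$: by the Leibniz rule it kills every monomial $a t^n$ with $a \in K$, hence every polynomial in $K[t]$, and by the quotient rule $\delta(P/Q) = (\delta(P)Q - P\,\delta(Q))/Q^2 = 0$ for $Q \neq 0$. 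Thus $\delta \equiv 0$ and $D_1 = D_2$.

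For existence, I would introduce the two derivations already used implicitly in Lemma \ref{lemma5}. Let $\partial_x \colon K(t) \to K(t)$ be differentiation of the coefficients, sending $t \mapsto 0$ and acting as the given derivation on $K$, so that $\partial_x(\sum a_i t^i) = \sum a_i' t^i$ on $K[t]$; and let $\partial_t \colon K(t) \to K(t)$ be the formal derivative with respect to $t$, sending $t \mapsto 1$ and killing $K$, so that $\partial_t(\sum a_i t^i) = \sum i a_i t^{i-1}$. A direct check on monomials shows each is a derivation on the polynomial ring $K[t]$. To extend each to the fraction field I would invoke the standard fact that a derivation $\partial$ on an integral domain $A$ extends uniquely to $\mathrm{Frac}(A)$ by the rule $\partial(P/Q) = (\partial(P)Q - P\,\partial(Q))/Q^2$. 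The content here — and the main obstacle — is that this rule is independent of the chosen representative, which follows by differentiating the cross-multiplication relation $P_1 Q_2 = P_2 Q_1$ obtained from two representations $P_1/Q_1 = P_2/Q_2$ of the same element. Granting this, $\partial_x$ and $\partial_t$ are genuine derivations on $K(t)$.

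Finally I would set $D = \partial_x + \mathcal{R}\,\partial_t$. Since the sum of two derivations is a derivation, and the product $r\,\partial$ of a ring element $r$ with a derivation $\partial$ is again a derivation, $D$ is a derivation on $K(t)$. By construction $D$ restricts to the given derivation on $K$ (because $\partial_x$ does while $\partial_t$ kills $K$), and $D(t) = \partial_x(t) + \mathcal{R}\,\partial_t(t) = 0 + \mathcal{R}\cdot 1 = \mathcal{R}$, as required. Once the representation-independence in the fraction-field extension is checked, both existence and uniqueness are immediate; moreover the resulting formula $D = \partial_x + \mathcal{R}\,\partial_t$ is exactly the generalization of (\ref{liu}) from Lemma \ref{lemma5}.
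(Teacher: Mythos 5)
Your proof is correct and takes essentially the same approach as the paper, which likewise defines the derivation by the explicit formula $R' = R'_K + \mathcal{R}\,R_t$ and deduces uniqueness from the fact that $K(t)$ is generated by $K$ and $t$. You simply fill in the verifications (well-definedness of the extension to the fraction field, the uniqueness argument via the difference of two derivations) that the paper leaves as ``one can check.''
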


\begin{proof} Consider the map sending $R\in K(t)$ to $R'\in K(t)$ given by the formula
\begin{equation}
				\label{diff}
R' = R'_K + \mathcal{R} R_t,
\end{equation}
where $R'_K$ denotes the derivative of $R$ by the differentiation rule of $K$, viewing $t$ as a constant. And $R_t$ denotes the usual partial derivative with respect to $t$, viewing the elements of $K$ as constant. One can check the the map $R\to R'$ provides a differentiation of the field $K(t)$ which satisfies all needed conditions. Such differentiation is unique, since  $K(t)$ is generated by $t$ and $K$.
\end{proof}

\begin{lemma}[Liouville's Principle]\label{principle}
Let $F$ be some differential field extension of $K$, and $y \in F$ transcendental over $K$. Suppose that $y$ satisfies an equation $y' = \mathcal{R}(y)$, where $\mathcal{R}$ is a rational function with coefficients in $K$.
Then $K \langle y \rangle$ is isomorphic to the field $K(t)$ of rational functions over $K$ with differentiation given  $t'=\mathcal R(t)$ (see Lemma \ref{difrat})
\end{lemma}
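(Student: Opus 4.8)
The plan is to build the field isomorphism forced by transcendence and then to check that it intertwines the two derivations, invoking the uniqueness clause already established in Lemma~\ref{difrat}. First I would note that since $y$ is transcendental over $K$, the evaluation map $\phi\colon K(t)\to F$ that fixes $K$ and sends $t\mapsto y$ is a well-defined, injective field homomorphism: any relation $P(y)/Q(y)=0$ with $Q(y)\neq 0$ would force the polynomial $P$ to vanish at the transcendental element $y$, hence $P=0$. Its image is exactly the subfield $K(y)\subseteq F$, so $\phi$ is a field isomorphism $K(t)\xrightarrow{\sim}K(y)$.

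Next I would verify that $K(y)$ is already closed under the differentiation of $F$, so that $K(y)=K\langle y\rangle$. Given $R=P/Q\in K(y)$, the Leibniz rule in $F$ together with $y'=\mathcal R(y)$ gives $R(y)'=R'_K(y)+\mathcal R(y)\,R_t(y)$, where $R'_K$ differentiates the coefficients and $R_t$ is the partial derivative in the indeterminate; transcendence guarantees $Q(y)\neq 0$, so no denominator vanishes and this expression lies in $K(y)$. Hence $K\langle y\rangle=K(y)$ and $\phi$ maps onto $K\langle y\rangle$.

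Finally I would show $\phi$ is a differential isomorphism by transporting the derivation of $F$ through $\phi$ to a derivation $D=\phi^{-1}\circ(\,\cdot\,)'\circ\phi$ on $K(t)$. The computation of the previous step shows that $D$ agrees with the differentiation of Lemma~\ref{difrat} both on $K$ and on $t$ (where each sends $t\mapsto\mathcal R(t)$); since $K(t)$ is generated over $K$ by $t$, the uniqueness clause of Lemma~\ref{difrat} forces $D$ to coincide with that differentiation, which is precisely the claim. I do not expect a serious obstacle here: the only step needing care is confirming the chain-rule identity $R(y)'=R'_K(y)+\mathcal R(y)\,R_t(y)$ for \emph{rational} rather than merely polynomial $R$, which reduces to the quotient rule in $F$ and the nonvanishing of denominators supplied by transcendence; the rest is the standard fact that a derivation is pinned down by its values on field generators.
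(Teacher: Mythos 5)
Your proposal is correct and follows essentially the same route as the paper: the evaluation map $t\mapsto y$ is injective by transcendence, the chain-rule identity shows it intertwines the derivation of Lemma~\ref{difrat} with that of $F$, and minimality of $K\langle y\rangle$ (equivalently, your observation that $K(y)$ is already closed under differentiation) identifies the image with $K\langle y\rangle$. You merely spell out two steps the paper leaves implicit, namely the verification of $R(y)'=R'_K(y)+\mathcal R(y)R_t(y)$ for rational $R$ and the appeal to the uniqueness clause of Lemma~\ref{difrat}.
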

		
\begin{proof} Consider the map from the field $K(t)$ of rational functions over $K$ to the field $K\langle y\rangle$ which fixes elements of $K$ and sends $t$ to $y$. Since $y$ is transcendental over $K$, this map is injective. Indeed, if $R_1(y)=R_2(y)$ for different functions $R_1,R_2\in K(t)$ then the identity	 $R_1(y)=R_2(y)$ provides an algebraic equation over $K$ on $y$ which is impossible.
	
So, the field $K\langle y \rangle$ contains an isomorphic copy of (the algebraic field) $K(t)$, and by Lemma \ref{difrat} $K(t)$ can be made a differential field with the choice $t' = \mathcal{R}(t)$. Our map preserves differentiation between the differential field $K(t)$ and its image in $K \langle y \rangle$. And since $K\langle y \rangle$ is the smallest differential field containing $K$ and $y$, we must have that $K \langle y \rangle$ is precisely the image of $K(t)$.
\end{proof}

The following lemma is obvious.
\begin{lemma} Let $K=K_0\subset \dots\subset K_n=F$ be a chain of fields. If for any $i=0,\dots,n-1$ the extension $K_i\subset K_{i+1}$ is pure transcendental, then the extension $K\subset F$ also is pure transcendental.
\end{lemma}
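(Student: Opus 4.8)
The plan is to recall that, in the terminology fixed just above, ``$K \subset F$ is pure transcendental'' means precisely that $K$ is relatively algebraically closed in $F$: every element of $F$ that is algebraic over $K$ already lies in $K$. With this reading, the lemma is the transitivity of this relation along a chain, and I would reduce the general statement to the two-step case $K \subset L \subset F$ by induction on the length $n$.

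First I would settle the two-step case, assuming both $K \subset L$ and $L \subset F$ are pure transcendental. Take any $y \in F$ that is algebraic over $K$. Because $K \subset L$, the same polynomial relation witnesses that $y$ is algebraic over $L$; since $L \subset F$ is pure transcendental, this forces $y \in L$. But now $y \in L$ is algebraic over $K$, and as $K \subset L$ is pure transcendental we obtain $y \in K$. Thus no element of $F$ is algebraic over $K$ without lying in $K$, i.e. $K \subset F$ is pure transcendental.

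Next I would run the induction on $n$. The case $n = 1$ is the hypothesis itself. For the inductive step, given a chain $K = K_0 \subset \dots \subset K_n = F$ in which every one-step extension is pure transcendental, the inductive hypothesis applied to the shorter chain $K_0 \subset \dots \subset K_{n-1}$ shows that $K_0 \subset K_{n-1}$ is pure transcendental. Applying the two-step case to $K_0 \subset K_{n-1} \subset K_n$ then gives that $K_0 \subset F$ is pure transcendental, completing the induction.

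I do not expect any genuine obstacle; the lemma is labelled obvious, and the only ingredient beyond unwinding the definition is the standard fact that algebraicity is preserved under enlarging the base field, namely that if $y$ is algebraic over $K$ and $K \subset L$, then $y$ is algebraic over $L$. This is exactly what makes the ``no new algebraic elements'' formulation of pure transcendence compose so cleanly along a chain.
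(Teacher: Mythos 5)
Your proof is correct and is essentially the same argument as the paper's: the paper also pushes an element $y \in F$ algebraic over $K_0$ down the chain one step at a time, using that algebraicity over $K_0$ implies algebraicity over each larger $K_i$ together with the pure transcendence of $K_i \subset K_{i+1}$. Your packaging as a two-step transitivity lemma plus induction on $n$ is just a more formal organization of the paper's ``repeat this argument'' descent.
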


\begin{proof} Assume that $y\in K_{i+1}$ is algebraic over $K_0$. Then since $K_0\subset K_i$, the element $y$ belongs to $K_i$. By repeating this argument, one obtains that $y\in K_0$.
\end{proof}
	
Consider a homogeneous linear differential equation (\ref{eq7})  over the field $\Bbb C(z)$ of complex rational functions with the usual differentiation.

\begin{lemma}\label{trans} If all coefficients of the equation (\ref{eq7}) are polynomials $a_i\in \Bbb C[z]$, then the extension $M$ of the field $\Bbb C(z)$, obtained by adjoining all solutions of (\ref{eq7}), is pure transcendental.
\end{lemma}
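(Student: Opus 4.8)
The plan is to exploit the analytic nature of $\mathbb{C}(z)$. Since the coefficients $a_i$ are polynomials, they are entire functions with no finite singular points, so the classical existence and uniqueness theorem for linear ODEs on the simply connected domain $\mathbb{C}$ guarantees that every solution of (\ref{eq7}) extends to a single-valued \emph{entire} function on all of $\mathbb{C}$. Fixing a base point and a fundamental system $y_1,\dots,y_n$ of entire solutions, I would identify $M$ with the subfield $\mathbb{C}(z)\langle y_1,\dots,y_n\rangle$ of the field of meromorphic functions on $\mathbb{C}$: every derivative $y_i^{(k)}$ is again entire, so each element of $M$, being a ratio of polynomial expressions in $z$ and the $y_i^{(k)}$, is a single-valued meromorphic function on $\mathbb{C}$.

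It then remains to prove the following assertion, which carries the real content: a function $f$ that is meromorphic on $\mathbb{C}$ and algebraic over $\mathbb{C}(z)$ is necessarily rational, i.e.\ lies in $\mathbb{C}(z)$. Granting this, any element of $M$ which is algebraic over $\mathbb{C}(z)$ lies in $\mathbb{C}(z)$, so the extension $\mathbb{C}(z)\subset M$ is pure transcendental, as required.

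\textbf{Proof of the assertion.} I would start from an algebraic relation $\sum_{i=0}^{d} p_i(z)\,f^i = 0$ with $p_i\in\mathbb{C}[z]$ and $p_d\neq 0$. Away from the finitely many zeros of $p_d$, dividing by $p_d$ exhibits $f$ as a root of a monic polynomial with holomorphic coefficients $p_i/p_d$; the standard Cauchy root bound $|f|\le 1+\max_{i<d}|p_i/p_d|$ then shows that $f$ stays bounded near every point where $p_d\neq 0$, so the poles of the meromorphic function $f$ can occur only among the finitely many zeros of $p_d$. The same root bound, applied for large $|z|$, shows that $|f(z)|$ grows at most polynomially as $z\to\infty$. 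Thus $f$ is meromorphic on $\mathbb{C}$ with finitely many poles and at most polynomial growth at infinity. Finally I would subtract from $f$ the rational principal parts at its poles; the difference is entire and of polynomial growth, hence a polynomial by the Cauchy estimates. Therefore $f$ is rational.

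\textbf{Main obstacle.} The one place where the hypothesis that the coefficients are polynomials is genuinely used, and hence the crux of the argument, is the very first step: it is precisely the absence of finite singular points that forces the solutions to be single-valued entire functions, so that algebraic elements of $M$ are honest single-valued meromorphic functions on $\mathbb{C}$ rather than multivalued branches with genuine ramification. Once single-valuedness on $\mathbb{C}$ is secured, the reduction ``algebraic over $\mathbb{C}(z)$ plus meromorphic on $\mathbb{C}$ implies rational'' is routine.
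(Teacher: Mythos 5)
Your proposal is correct and follows the same route as the paper: the paper's two-sentence proof observes that $M$ sits inside the meromorphic functions on $\mathbb{C}$ and that a meromorphic function algebraic over $\mathbb{C}(z)$ is rational, which is exactly your argument with the details (entirety of solutions, the Cauchy root bound, removal of principal parts) filled in.
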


\begin{proof} The extension $M$ is contained in the field of meromorphic functions on the complex line. A meromorphic function is algebraic over $\Bbb C(z)$ if and only if it is rational.
\end{proof}

\section{Adjoining a special transcendental element}

In this section, we prove Lemma \ref{mainlemma} which provides an important property of special transcendental extensions.

For a  rational function   $R\in K(y)$ over a field $K$, we denote by $\ord R$ its order  at the point $y=0$ (see Definition \ref{order}). For any functions  $R_1, R_2 \in K(y)$, the following relations hold:
\begin{enumerate}
\item $\ord (R_1 R_2)=\ord R_1+\ord R_2$
\item $\ord (R_1+R_2) \geq \min (\ord R_1, \ord R_2)$, and if $\ord R_1\neq \ord R_2$, \\
then $\ord (R_1+R_2)= \min (\ord R_1, \ord R_2)$.
\end{enumerate}

Let $D:K(y)\to K(y)$ be any differentiation of the field $K(y)$.

\begin{lemma}\label{group} Nonzero elements $f\in K(y)$ satisfying inequality $\ord D(f)\geq \ord f$ form a multiplicative subgroup in the field $K(y)$.
\end{lemma}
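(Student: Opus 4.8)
The plan is to verify directly that the set $G=\{f\in K(y): f\neq 0,\ \ord D(f)\geq \ord f\}$ satisfies the three defining conditions of a subgroup of the multiplicative group $K(y)^\times$: it contains the identity, it is closed under products, and it is closed under inverses. Throughout I will use only the two order relations stated just before the lemma — additivity $\ord(R_1R_2)=\ord R_1+\ord R_2$ and superadditivity $\ord(R_1+R_2)\geq \min(\ord R_1,\ord R_2)$ — together with the fact that $D$ is a differentiation (Leibniz rule).

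For the identity I first observe that $D(1)=0$: applying $D$ to $1=1\cdot 1$ and using Leibniz gives $D(1)=2D(1)$. Since $\ord 1=0$ while $\ord 0=+\infty$, the inequality $\ord D(1)\geq \ord 1$ holds trivially, so $1\in G$.

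For closure under products, take $f,g\in G$. The Leibniz rule gives $D(fg)=D(f)\,g+f\,D(g)$. Applying additivity of $\ord$ to each summand yields $\ord(D(f)g)=\ord D(f)+\ord g\geq \ord f+\ord g$ and likewise $\ord(f\,D(g))\geq \ord f+\ord g$. Superadditivity on the sum then gives $\ord D(fg)\geq \ord f+\ord g=\ord(fg)$, so $fg\in G$.

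For closure under inverses, take $f\in G$. Differentiating $f\cdot f^{-1}=1$ and using $D(1)=0$ gives $D(f^{-1})=-f^{-2}D(f)$, whence $\ord D(f^{-1})=\ord D(f)-2\ord f\geq \ord f-2\ord f=-\ord f=\ord(f^{-1})$, so $f^{-1}\in G$. The argument is routine throughout; the only place needing slight care is the inverse case, where one must correctly account for the $-2\ord f$ contribution coming from $f^{-2}$ and check that the hypothesis $\ord D(f)\geq \ord f$ is exactly what is needed to close the inequality.
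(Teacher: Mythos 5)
Your proof is correct: the identity, product, and inverse checks all go through, provided one adopts (as you do) the convention $\ord 0 = +\infty$, which is needed whenever $D(f)=0$ (e.g.\ for constants); the paper's own Definition of order leaves $\ord 0$ undefined, so making this explicit is a small point in your favor. However, your route is genuinely different from the paper's. You verify the three subgroup axioms one by one, directly from the Leibniz rule together with additivity and superadditivity of $\ord$. The paper instead rewrites the condition $\ord D(f) \geq \ord f$ as $\ord\bigl(D(f)/f\bigr) \geq 0$, notes that the logarithmic derivative $f \mapsto D(f)/f$ is a homomorphism from the multiplicative group of $K(y)$ to its additive group, and observes that elements of nonnegative order form an additive subgroup; the set in question is then the preimage of a subgroup under a homomorphism, hence a subgroup, with no further computation. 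The underlying identities are the same --- your product step is the homomorphism identity $D(fg)/(fg) = D(f)/f + D(g)/g$ multiplied out, and your inverse step is $D(f^{-1})/f^{-1} = -D(f)/f$ in disguise --- but the paper's packaging is shorter and isolates a structural fact, the logarithmic-derivative homomorphism, which the paper reuses later (in the section on algebraic relations between exponentials of integrals). Your version buys self-containedness and explicit treatment of edge cases; the paper's buys brevity and a reusable conceptual tool.
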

\begin{proof} The inequality $\ord D(f)\geq \ord f$ can be rewritten as $\ord (D f/f)\geq 0$. The logarithmic derivative $Df/f$ provides a homomorphism of the multiplicative group of the field $K(y)$ to its additive group. Element $f$ satisfies $\ord Df\geq \ord f$  if and only if it belongs to the pre-image of the additive subgroup of functions $g\in K(y)$ having nonnegative order.
\end{proof}

Let $y$ be a special transcendental element over a differential field $K$ satisfying the differential equation $y' = \mathcal R (y) = y R_0 (y)$, where $R_0$ has nonnegative order at $y=0$, i.e., $\ord R_0 \geq 0$. The differential field $K\langle y\rangle$ is isomorphic to the field $K(y)$ with the differentiation $R' = R'_K + y R_0 R_y$.

\begin{lemma}\label{mainlemma} For any element $R$  in the differential field $K\langle y\rangle$, the following inequality holds:
\begin{equation}\label{val}
\ord R'\geq \ord R
\end{equation}
\end{lemma}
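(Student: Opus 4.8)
The plan is to deduce the inequality for all elements of $K(y)$ from Lemma \ref{group}, which tells us that the nonzero $f$ satisfying $\ord D(f)\geq \ord f$ form a multiplicative subgroup $G$ of $K(y)^{*}$, where $D$ denotes the differentiation $R\mapsto R'=R'_K+yR_0R_y$. Since the multiplicative group $K(y)^{*}$ is generated by $K^{*}$ together with the monic irreducible polynomials in $y$ (every nonzero rational function factors uniquely as a unit from $K^{*}$ times a product of integer powers of monic irreducibles), it suffices to verify that each such generator lies in $G$. Once this is done we have $G=K(y)^{*}$ and the lemma follows for all nonzero $R$; the case $R=0$ is trivial, since then $\ord R'=\ord R=+\infty$.

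Next I would check the generators one by one, in each case using the two order rules recalled just before Lemma \ref{group} together with the formula $R'=R'_K+yR_0R_y$. For a nonzero element $c\in K$ (a constant with respect to $y$) we have $c_y=0$, so $c'=c'_K$ lies in $K$ and hence $\ord c'\geq 0=\ord c$. For the generator $y$ itself we use $y'=yR_0$, whence $\ord y'=1+\ord R_0\geq 1=\ord y$, the inequality being exactly where the hypothesis $\ord R_0\geq 0$ is used. Finally, for a monic irreducible polynomial $p\neq y$ we note that $p(0)\neq 0$ (otherwise $y\mid p$, contradicting irreducibility), so $\ord p=0$, and it remains to show $\ord p'\geq 0$.

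The computation for $p$ is the heart of the matter, and it is where the divisibility of $\mathcal R$ by $y$ (that is, the fact that $\mathcal R=yR_0$) is decisive. Writing $p'=p'_K+yR_0p_y$, the term $p'_K$ is obtained by differentiating the coefficients of $p$ and so is again a polynomial in $y$, giving $\ord p'_K\geq 0$. The term $yR_0p_y$ carries the explicit factor $y$, so $\ord(yR_0p_y)=1+\ord R_0+\ord p_y\geq 1$, since $p_y$ is a polynomial and $\ord R_0\geq 0$. By the order rule for sums, $\ord p'\geq\min(0,1)=0=\ord p$, as required.

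I expect no serious obstacle beyond organizing the argument in this way: the real content is the single observation that, thanks to the factor $y$ in $\mathcal R=yR_0$, the partial-derivative contribution $yR_0R_y$ to $R'$ can never lower the order below that coming from $R'_K$, so poles at $y=0$ can only persist or deepen under $D$, never be created. Reducing to generators via Lemma \ref{group} is precisely what promotes this one clean observation into a statement about an arbitrary $R$, avoiding any case analysis on the global shape of $R$.
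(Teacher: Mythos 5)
Your argument is correct. The case split on generators is exhaustive, the order computations for $c\in K^{*}$, for $y$, and for a monic irreducible $p\neq y$ are all right, and you correctly isolate the one place where the hypothesis $\ord R_0\geq 0$ (equivalently, the divisibility of $\mathcal R$ by $y$) is used, namely the generator $y$ itself. The route is close to the paper's but deploys Lemma \ref{group} differently. You apply it once, to the actual derivation $D:R\mapsto R'_K+yR_0R_y$, and then shrink the verification down to a set of multiplicative generators of $K(y)^{*}$ (units, $y$, and the monic irreducibles distinct from $y$), so the whole lemma rests on three one-line computations. The paper instead applies Lemma \ref{group} to the two auxiliary derivations $R\mapsto R'_K$ and $R\mapsto yR'_y$ separately: it checks the inequalities $\ord R'_K\geq\ord R$ and $\ord\, yR'_y\geq\ord R$ directly for \emph{all} polynomials (where they are immediate coefficientwise), extends each to all of $K(y)^{*}$ by the group lemma, and only then assembles $R'=R'_K+R_0\cdot(yR'_y)$ using $\ord R_0\geq 0$ and the order rules for sums and products. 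Your version is marginally more economical and makes the role of the factor $y$ in $\mathcal R$ more visible; the paper's version yields the slightly stronger intermediate fact that each of the two constituent derivations separately never decreases the order. Either is a complete proof.
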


\begin{proof} Indeed, $R' = R'_K + y R_0 R'_y$. It is easy to check that $\ord R'_K \geq \ord R$ and $\ord y R_y' \geq \ord R$. Indeed, these inequalities obviously  hold for polynomials over $K$. Now one can apply Lemma \ref{group}, and the lemma follows from the properties of order listed  above.
\end{proof}

\section{Existence of a solution in an admissible extension}

In this section, we prove the following theorem.

\begin{theorem}\label{firstmain} A homogeneous linear differential equation (\ref{eq7}) has a solution in some admissible extension of the field $K$ if and only if it has a solution $y$ whose logarithmic derivative belongs to $K$, and $y$ either belongs to $K$, or is transcendental over $K$.
\end{theorem}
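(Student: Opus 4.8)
The plan is to prove the two implications separately, with essentially all of the work in the ``only if'' direction, which runs the argument of Theorem~\ref{I.2} at the level of the generalized Riccati equation, layer by layer through the chain.

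The ``if'' direction is immediate. Suppose the equation has a solution $y$ with $u:=y'/y\in K$, where $y\in K$ or $y$ is transcendental over $K$. If $y\in K$, the length-zero chain already exhibits $y$ in an admissible extension. If $y$ is transcendental, then $y'=uy$ with $u\in K$ makes $y$ an exponential of an integral over $K$, hence a special transcendental element by the Lemma following Definition~\ref{special}; thus $K\subset K\langle y\rangle$ is admissible and contains the solution $y$.

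For ``only if'', let $y\neq 0$ be a solution lying in an admissible extension $E=K_n$, with chain $K=K_0\subset\cdots\subset K_n$ where each step $K_i\subset K_{i+1}\cong K_i(t_{i+1})$ adjoins a special transcendental $t_{i+1}$ satisfying $t'=tR_0(t)$, $\ord R_0\geq 0$. By Lemma~\ref{lemma9} the logarithmic derivative $u=y'/y\in E$ solves the generalized Riccati equation~(\ref{ric}). I would descend $u$ one layer at a time. Writing $t=t_n$ and working in $K_{n-1}(t)$, first show $\ord_t u\geq 0$: if $\ord_t u=k<0$, then by iterating Lemma~\ref{mainlemma} every derivative $u^{(j)}$ has $\ord_t u^{(j)}\geq k$, so each monomial of $D_j(u)$ has order $\geq jk$, while the top term $u^n$ of $D_n$ has order exactly $nk$. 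Since $a_{n-j}D_j(u)$ has order $\geq jk>nk$ for $j<n$ and $\ord\tilde D_n\geq(n-1)k>nk$, the term $u^n$ is the unique term of minimal order, so by the order-of-sum rule the left side of~(\ref{ric}) has order exactly $nk\neq\infty$, contradicting its vanishing. Hence $u$ lies in the local ring $\mathcal O=\{f:\ord_t f\geq 0\}$, which is differentiation-stable by Lemma~\ref{mainlemma}. Because $\mathcal R=tR_0(t)$ is divisible by $t$, the constant $t=0$ solves $t'=\mathcal R(t)$, and evaluation $\mathrm{ev}_0:\mathcal O\to K_{n-1}$ is a differential ring homomorphism: in $f'=f'_K+\mathcal R\,f_t$ the term $\mathcal R f_t$ vanishes at $t=0$, and $\mathrm{ev}_0(f'_K)=(\mathrm{ev}_0 f)'$. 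Applying $\mathrm{ev}_0$ to the identity~(\ref{ric}) shows $\mathrm{ev}_0(u)\in K_{n-1}$ solves the generalized Riccati equation there; iterating down the chain produces $u_0\in K$ solving it.

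The final step is reconstruction, and this is where I expect the main obstacle to lie. By Lemma~\ref{lemma9}, any $y_0$ with $y_0'=u_0y_0$ and $u_0\in K$ solves~(\ref{eq7}) and is an exponential of an integral over $K$, hence algebraic or transcendental over $K$; if it is transcendental we are done. The trouble is that, unlike the integral case of Lemma~\ref{integral}, an exponential of an integral can be \emph{algebraic of degree greater than one} without lying in $K$ (for instance $\sqrt z$ with $u_0=1/(2z)$ over $\mathbb C(z)$), and such a $y_0$ would satisfy neither alternative of the conclusion. To close this gap I would track the order $\ord_t y$ of the solution itself rather than only of $u$: when $\ord_t y=0$ one applies $\mathrm{ev}_0$ directly to $y$ (again a differential homomorphism, with $y(0)\neq 0$) to obtain a nonzero solution already in $K_{n-1}$, which descends to a solution in $K$ meeting the conclusion; in the case $\ord_t y=m\neq 0$ one writes $y=t^m v$ with $\ord_t v=0$ and uses $t'/t=R_0(t)$ to reduce the transcendence type of $y_0$ to that of an exponential of integral of $m R_0(0)$, and then invokes that $E$ is a \emph{pure transcendental} (hence regular) extension of $K$, so that $K$ is algebraically closed in $E$, to rule out the algebraic-but-not-in-$K$ outcome. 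Coupling the valuation/descent mechanism with this regularity input is, I expect, the delicate heart of the argument.
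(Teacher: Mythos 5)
Your proposal follows the paper's route almost line for line. The ``if'' direction is the same observation (a transcendental exponential of integral is a special transcendental element, so $K$ or $K\langle y\rangle$ is the required admissible extension). For ``only if'', your descent of the Riccati solution through the chain is exactly the content of the paper's Lemma \ref{rosen} and Corollary \ref{coro}: the estimate $\ord u^n=nk<\ord\tilde T$ for $k<0$, which forces $\ord u\geq 0$, followed by the remark that evaluation at $t=0$ is a differential homomorphism on the local ring because $\mathcal R$ is divisible by $t$. This part of your argument is correct and, if anything, more carefully justified than the paper's own wording.

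The divergence is the endgame, and there your write-up stops short of a proof. The paper disposes of the dichotomy in one sentence: having produced $u\in K$ solving (\ref{ric}), it takes a nonzero $z$ with $z'/z=u$ (a solution of (\ref{eq7}) by Lemma \ref{lemma9}) and asserts that if $z$ is algebraic over $K$ then $z\in K$, ``since $E$ is a pure transcendental extension of $K$.'' You correctly observe that this presupposes $z$ can be located inside $E$ (or inside some pure transcendental extension of $K$): the descended $u$ is in general not the logarithmic derivative of any element of $E$, and your example $\sqrt z$ with $u=1/(2z)$ over $\mathbb{C}(z)$ shows that an exponential of integral can in principle be algebraic of degree greater than one. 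So your worry points at a genuinely terse step of the paper. But your proposed repair does not close it: in the case $\ord_t y=m\neq 0$ you reduce to an exponential of integral of $mR_0(0)$ plus a logarithmic derivative of an element of $K_{n-1}$, yet that exponential of integral is again not visibly an element of $E$, so the regularity of $E$ over $K$ cannot be applied to it any more than the paper's sentence can be applied to $z$; and in the case $\ord_t y=0$ the evaluated solution $\mathrm{ev}_0(y)\in K_{n-1}$ may acquire nonzero order at the next layer, returning you to the unresolved case. As written, your argument reproduces the paper's proof up to and including its weakest step, identifies that step accurately, but leaves it open rather than completing it.
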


The proof is based on the following lemma.

\begin{lemma}\label{ricca} Let $K\subset K\langle y\rangle$ be an extension by adjoining a special transcendental element $y$ over $K$. The generalized Riccati equation (\ref{ric}) for the equation (\ref{eq7}) has a solution in $K\langle y\rangle$ if and only if it has a solution in the field $K$.
\end{lemma}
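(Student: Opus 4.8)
The plan is to prove the nontrivial direction; the converse is immediate, since $K\subset K\langle y\rangle$ means any solution of (\ref{ric}) lying in $K$ already lies in $K\langle y\rangle$. So suppose $u=R(y)\in K\langle y\rangle\cong K(y)$ solves the generalized Riccati equation (\ref{ric}). Imitating the illustrative example (Theorem \ref{I.2}), I would manufacture a solution in $K$ by ``evaluating at $y=0$'', i.e.\ by applying the map $\mathrm{ev}_0\colon R\mapsto R(0)$. Two things must be arranged: first, that the solution $u$ genuinely has nonnegative order at $y=0$, so that $\mathrm{ev}_0$ is defined on it; and second, that $\mathrm{ev}_0$ carries a solution of (\ref{ric}) to a solution of (\ref{ric}).

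The first and harder step is to show $\ord u\geq 0$. Suppose instead $\ord u=k<0$. The essential input is Lemma \ref{mainlemma}, which yields $\ord u^{(j)}\geq \ord u=k$ for every derivative. Together with additivity of $\ord$ on products and the structural fact that $D_m=u^m+\tilde D_m$ with $\deg\tilde D_m<m$, this lets me evaluate $\ord D_m(u)$ exactly: the leading monomial $u^m$ has order $mk$, whereas every monomial of $\tilde D_m$ has total degree $d<m$ and hence order $\geq dk>mk$ (because $k<0$). By the second order property, $\ord D_m(u)=mk$. Since each $a_i\in K$ has order $0$, the successive terms of (\ref{ric}) then have strictly increasing orders $nk<(n-1)k<\dots<k<0$, so the term $D_n(u)$ of least order $nk$ cannot be cancelled; the left-hand side of (\ref{ric}) thus has well-defined finite order $nk$ and is nonzero, a contradiction. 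This is the main obstacle, being the general form of the ``$u^2$ dominates'' argument of Theorem \ref{I.2}; it is precisely here that Lemma \ref{mainlemma} is indispensable, since without controlling the orders of the derivative terms $u',\dots,u^{(n-1)}$ inside $D_n$ one could not rule out cancellation of the leading term $u^n$.

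With $\ord u\geq 0$ in hand, I would conclude through the evaluation homomorphism. Let $\mathcal O\subset K(y)$ be the subring of rational functions of nonnegative order. By Lemma \ref{mainlemma}, $\mathcal O$ is closed under the differentiation of $K\langle y\rangle$, and $\mathrm{ev}_0\colon\mathcal O\to K$ is a ring homomorphism. The key point is that $\mathrm{ev}_0$ commutes with differentiation: in $R'=R'_K+yR_0R_y$ the second summand carries an explicit factor $y$ and, since $\ord R_0\geq 0$ and $\ord R\geq 0$, vanishes at $y=0$, while $\mathrm{ev}_0(R'_K)=(\mathrm{ev}_0 R)'$ because $R'_K$ differentiates only the coefficients. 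Hence $\mathrm{ev}_0$ is a homomorphism of differential rings. Applying it to (\ref{ric}), and using that each $D_m$ is a fixed integer polynomial in $u$ and its derivatives while $\mathrm{ev}_0(a_i)=a_i$, gives $D_n(\bar u)+a_1D_{n-1}(\bar u)+\dots+a_n=0$ with $\bar u=u(0)\in K$. Thus $\bar u$ is a solution of the generalized Riccati equation lying in $K$, which completes the proof.
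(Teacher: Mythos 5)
Your proof is correct and takes essentially the same route as the paper: the paper deduces the statement from the slightly more general Lemma \ref{rosen} on Rosenlicht type equations, whose proof consists of exactly your two steps --- using Lemma \ref{mainlemma} together with the leading-term structure $T=u^n+\tilde T$ to show that any solution in $K\langle y\rangle$ has nonnegative order at $y=0$, and then evaluating at $y=0$. Your discussion of $\mathrm{ev}_0$ as a differential ring homomorphism on the local ring $\mathcal O$ merely spells out the verification that the paper leaves to the reader.
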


Let $T$ be a polynomial over $K$ in $u$ and in its derivatives $u, u', \dots, u^{(k)},\dots$.

\begin{definition} Say that $T$ of some degree $n$ is a {\it Rosenlicht type polynomial} if the degree $n$ homogeneous part of $T$ is equal to $u^n$ (i.e., $T =u^n + \tilde T$, where $\deg \tilde T < n$).
	
An equation
\begin{equation}\label{ro}
T(u,u',\dots, u^{(n)})=0,
\end{equation}
where $T$ is a Rosenlicht type polynomial, is called a {\it Rosenlicht type equation}.
\end{definition}

The generalized Riccati equation is a Rosenlicht type equation. This property of the generalized Riccati equation plays the key role in the classical approach (which goes back to Liouville and was specified by Rosenlicht) to the problem of solvability of homogeneous linear differential equations in finite terms. Thus, instead of Lemma \ref{ricca}, we prove the following more general lemma.

\begin{lemma}\label{rosen} Let $K\subset K\langle y\rangle$ be an extension by adjoining a special transcendental element $y$ over $K$. An equation (\ref{ro}) has a solution in $K\langle y\rangle$ if and only if it has a solution in the field $K$.
\end{lemma}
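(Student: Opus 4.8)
The plan is to follow the strategy of the illustrative Theorem \ref{I.2}: show that a solution $u \in K\langle y\rangle$ is forced to have nonnegative order at $y=0$, and then ``substitute $y=0$'' to extract a solution lying in $K$. The ``if'' direction is immediate, since $K \subset K\langle y\rangle$. For the converse, identify $K\langle y\rangle$ with the field $K(y)$ equipped with the derivation $R' = R'_K + yR_0R'_y$ (Lemma \ref{principle}), and let $u \in K(y)$ be a solution of (\ref{ro}). If $u=0$ then $u \in K$ and there is nothing to prove, so assume $u \neq 0$ and set $m = \ord u$.

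First I would establish the order bound $m \geq 0$; this is the heart of the argument and the precise place where the Rosenlicht structure is used. By Lemma \ref{mainlemma} we have $\ord u' \geq \ord u$, and iterating gives $\ord u^{(k)} \geq m$ for every $k \geq 0$. Write $T = u^n + \tilde T$ with $\deg \tilde T < n$. Every monomial of $\tilde T$ has the form $c\prod_j (u^{(j)})^{i_j}$ with $c \in K$ and total degree $\sum_j i_j = d \leq n-1$, so by additivity of order (property (1)) together with $\ord u^{(j)} \geq m$ its order is at least $dm$. Suppose, toward a contradiction, that $m < 0$. Then for every $d \leq n-1$ we have $dm \geq (n-1)m > nm$, so each term of $\tilde T$ has order strictly greater than $\ord(u^n) = nm$. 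By property (2) of order this forces $\ord T = nm$, a finite integer, which is impossible because $T(u,u',\dots,u^{(n)}) = 0$. Hence $m = \ord u \geq 0$.

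Next I would package the ``substitution $y=0$'' as a morphism of differential fields. Let $O_0 = \{R \in K(y) : \ord R \geq 0\}$ be the local ring at $y=0$ and $\mathfrak m = \{R \in K(y): \ord R \geq 1\}$ its maximal ideal. By Lemma \ref{mainlemma}, $\ord R \geq 0$ implies $\ord R' \geq 0$ and $\ord R \geq 1$ implies $\ord R' \geq 1$; thus $O_0$ is a differential subring and $\mathfrak m$ a differential ideal. Consequently the residue map $\phi : O_0 \to O_0/\mathfrak m \cong K$, which is simply evaluation $R \mapsto R(0)$, is a homomorphism of differential fields fixing $K$. Concretely, in $R' = R'_K + yR_0R'_y$ the summand $yR_0R'_y$ lies in $\mathfrak m$, while $R'_K(0) = (R(0))'$, so $\phi(R') = \phi(R)'$.

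Finally, since $m \geq 0$ we have $u, u', \dots, u^{(n)} \in O_0$. Applying $\phi$ to the relation $T(u,u',\dots,u^{(n)}) = 0$, and using that $\phi$ fixes the coefficients of $T$ (which lie in $K$) and satisfies $\phi(u^{(k)}) = \phi(u)^{(k)}$, yields $T(v,v',\dots,v^{(n)}) = 0$ with $v = u(0) \in K$. Thus $v$ is the desired solution in $K$, proving the lemma. The main obstacle is the order estimate of the second paragraph: it is exactly the hypothesis that the top-degree homogeneous part of $T$ equals $u^n$ that lets the leading term dominate and rules out a solution of negative order, mirroring the treatment of the $u^2$ term in the Riccati equation of Theorem \ref{I.2}.
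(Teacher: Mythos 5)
Your proposal is correct and follows the same route as the paper's own proof: use Lemma \ref{mainlemma} and the Rosenlicht structure of $T$ to show that a solution $u\in K\langle y\rangle$ must have nonnegative order at $y=0$ (otherwise the term $u^n$ of order $nm$ cannot be cancelled), and then evaluate the relation at $y=0$ to obtain a solution in $K$. Your write-up merely supplies more detail than the paper does, in particular by packaging the evaluation at $y=0$ as a differential homomorphism $O_0\to O_0/\mathfrak m\cong K$, which is a clean justification of the paper's one-line ``evaluate both sides at $y=0$'' step.
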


\begin{proof} Let $R(y)\in K\langle y\rangle$ where $y$ is a special transcendental element over $K$. Let us show that if $R(y)$ satisfies (\ref{ro}), then $\ord R=m$ is nonnegative. Indeed, if $m<0$ then $\ord u^n=nm$ is strictly smaller than $\ord \tilde T$, which follows from the definition of Rosenlicht polynomial and Lemma \ref{mainlemma}. Thus, $m\geq 0$. So, one can evaluate both sides of (\ref{ro}) at y=0 and obtain a solution of (\ref{ro}) in the field $K$.
\end{proof}

\begin{corollary} \label{coro} An equation (\ref{ro}) has a solution in an admissible extension $E$ of the field $K$ if and only if it has a solution in the field $K$.
\end{corollary}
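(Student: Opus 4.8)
The plan is to deduce the corollary from Lemma \ref{rosen} by a descending induction along the defining chain of the admissible extension. The backward implication is immediate: if equation (\ref{ro}) already has a solution in $K$, then since $K\subset E$ that very element is a solution lying in $E$. So all the content sits in the forward direction.

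For the forward direction, I would argue as follows. Suppose (\ref{ro}) has a solution in an admissible extension $E$. By the definition of an admissible extension there is a chain $K=K_0\subset K_1\subset\dots\subset K_n=E$ in which each step $K_i\subset K_{i+1}$ is obtained by adjoining a special transcendental element $y_i$ over $K_i$, so that $K_{i+1}=K_i\langle y_i\rangle$. I would then run a descending induction on $i$: starting from the assumed solution in $K_n$, apply Lemma \ref{rosen} to the extension $K_{n-1}\subset K_{n-1}\langle y_{n-1}\rangle$ to produce a solution in $K_{n-1}$, then apply it to $K_{n-2}\subset K_{n-2}\langle y_{n-2}\rangle$ to produce a solution in $K_{n-2}$, and so on, descending one field at a time until a solution is obtained in $K_0=K$.

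The only point that genuinely needs checking is that Lemma \ref{rosen} really does apply at every stage, i.e. that (\ref{ro}) is a Rosenlicht type equation over each intermediate field $K_i$ and not merely over the original $K$. This is where I expect the sole (and mild) subtlety to lie. It is resolved by observing that being of Rosenlicht type is a property of the polynomial $T$ itself — namely that its top-degree homogeneous part equals $u^n$ — and this property is untouched by enlarging the field of coefficients. Since $T$ has coefficients in $K\subset K_i$, the same $T$ is a Rosenlicht type polynomial over $K_i$ for every $i$, so the hypotheses of Lemma \ref{rosen}, with base field $K_i$ and special transcendental element $y_i$, are satisfied at each stage. With this observation in hand the induction goes through verbatim, and the corollary follows; there is no serious obstacle beyond this bookkeeping, as the substantive work has already been carried out in Lemma \ref{rosen}.
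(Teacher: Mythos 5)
Your argument is correct and is essentially the paper's own proof: a descending induction along the defining chain, applying Lemma \ref{rosen} at each step, with the trivial backward direction. Your extra observation that the Rosenlicht type property of $T$ is unaffected by enlarging the coefficient field from $K$ to $K_i$ is a valid (and implicitly assumed) point of bookkeeping.
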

\begin{proof} Let $K_0\subset\dots, \subset K_n=E$ be a chain in which each extension $K_i\subset K_{i+1}$ is obtained by adjoining a  special transcendental element over $K_i$. By Lemma \ref{rosen}, if (\ref{ro}) has a solution in $K_{i+1}$, it also has a solution in $K_i$. This argument proves corollary.
\end{proof}

\begin{proof}[Proof of Theorem \ref{firstmain}] By Lemma \ref{lemma9}, the logarithmic derivative of any nonzero solution $y$ of (\ref{eq7}) has to satisfy its generalized Riccati equation. Assume that  $y$ belongs to an admissible extension $E$ of $K$. Corollary \ref{coro} implies that the generalized Riccati equation has a solution $u\in K$. Thus, (\ref{eq7}) has a nonzero solution $z$ such that $z'/z=u\in K$. If this element $z$ is algebraic over $K$, it belongs to $K$, since $E$ is a pure transcendental extension of $K$.
\end{proof}

\section{Algebraic relations between exponentials of integrals}

Let $K$ be a differential field. For $y$ belonging to the multiplicative group $K^*=K\setminus \{0\}$ of the field $K$, the logarithmic derivative $y'/y$ is defined. It is easy to check the following lemma.

\begin{lemma} The map, which sends each element $y\in K^*$ to its logarithmic derivative, provides a homomorphism of the multiplicative group $K^*$ to the additive group $K_+$ of the field $K$, i.e., for any $u,v\in K^*$  any integers $k,n$ the following identity holds:

$$ \frac{(u^kv^n)'}{u^kv^n}=k\frac{u'}{u}+n\frac{v'}{v}.$$
\end{lemma}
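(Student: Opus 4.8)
The plan is to verify that the logarithmic derivative map $L(y) = y'/y$ sends multiplication to addition, and then to obtain the displayed identity as the standard fact that a group homomorphism respects integer powers. All the content reduces to the Leibniz rule, so I expect no genuine obstacle: the verification is purely computational, which is exactly why the paper calls it easy to check.

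First I would establish the product relation. For $u, v \in K^*$, the Leibniz rule gives $(uv)' = u'v + uv'$, and dividing by $uv$ yields
$$\frac{(uv)'}{uv} = \frac{u'v + uv'}{uv} = \frac{u'}{u} + \frac{v'}{v},$$
which says precisely that $L(uv) = L(u) + L(v)$; thus $L$ is a homomorphism from the multiplicative group $K^*$ to the additive group $K_+$.

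Next I would record compatibility with inverses. Differentiating $u\,u^{-1} = 1$ and using $1' = 0$ gives $u' u^{-1} + u (u^{-1})' = 0$, so $(u^{-1})' = -u' u^{-2}$ and hence $L(u^{-1}) = -L(u)$. Together with the product relation, an induction on $k$ then shows $L(u^k) = k\,L(u)$ for every nonnegative integer $k$, and the inverse identity extends this to all $k \in \Bbb Z$ (note that $u^k \in K^*$ automatically, since $K^*$ is a group).

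Finally, applying the homomorphism property to $u^k v^n$ and then the power rule to each factor gives
$$\frac{(u^k v^n)'}{u^k v^n} = L(u^k v^n) = L(u^k) + L(v^n) = k\,\frac{u'}{u} + n\,\frac{v'}{v},$$
which is the asserted identity. Every step follows directly from the Leibniz rule, so there is no hard part to isolate here; the statement is genuinely elementary and serves as a bookkeeping tool for tracking logarithmic derivatives of monomials in exponentials of integrals.
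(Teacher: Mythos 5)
Your proof is correct and complete; the paper itself omits the argument entirely (it only says the lemma ``is easy to check''), and your verification via the Leibniz rule, the inverse identity, and induction on the exponent is exactly the standard computation the authors intend the reader to supply.
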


\begin{definition} Let $F$ be any extension of a differential field $K$. A nonzero element $y\in F$ is an {\it exponential of integral} of $a\in K$ if $y'=ay$. In other words, $y\in F^*$ is an exponential of integral of $a\in k$ if $a$ is the logarithmic derivative of $y$.
\end{definition}

Let $F$ be an extension of  $K$.

\begin{lemma}\label{monom}  Assume that a nonzero element $y\in F$ satisfies an equation  $y'=ay$ for  $a\in K$ and  an element $u\in F$ is representable in a form $u=c y^m$, where $c\in K$,  $m\in \Bbb Z$. Then
\begin{enumerate}
\item  the derivative $u'$ of $u$ also is representable in the form $u' = \tilde c y^m$, where $\tilde c=c'+ma$;
\item $u'=0$ if and only if $c=\lambda y^{-m}$, where $\lambda$ is a constant, i.e., $\lambda'=0$. If $u'=0$, then $y^m\in K$.
\end{enumerate}
\end{lemma}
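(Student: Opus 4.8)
The plan is to prove both parts by direct computation; the only inputs beyond the Leibniz rule are the relation $y'=ay$ and the standing convention that $K$ and $F$ share the same field of constants.

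For part (1), I would first record that $(y^m)' = ma\,y^m$ for every $m\in\Bbb Z$. The cleanest justification uses the multiplicativity of the logarithmic derivative noted just above the lemma (applied inside $F$): since $y'/y=a$, we get $(y^m)'/y^m = m(y'/y) = ma$, whence $(y^m)' = ma\,y^m$, and this is valid for negative $m$ as well because $y\neq 0$. Applying the Leibniz rule to $u=cy^m$ then gives
\[
u' = c'y^m + c(y^m)' = c'y^m + mac\,y^m = (c' + mac)y^m,
\]
so indeed $u'=\tilde c\,y^m$, with $\tilde c = c' + mac$ — one must keep the factor $c$ in the second term. This settles (1).

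For part (2), I would argue in two directions. If $c=\lambda y^{-m}$ with $\lambda'=0$, then $u=cy^m=\lambda$, so $u'=\lambda'=0$; this is the easy implication. Conversely, suppose $u'=0$. Then $u$ is a constant of $F$, and here the standing hypothesis does the work: since $K$ and $F$ have the same constants, $u=\lambda$ for some $\lambda\in K$ with $\lambda'=0$. Reading off $c$ from $cy^m=\lambda$ gives $c=\lambda y^{-m}$, which is precisely the asserted form. Finally, in the nontrivial case $u\neq 0$ (equivalently $c\neq 0$, $\lambda\neq 0$), the same relation gives $y^m=\lambda c^{-1}$; as $\lambda$ is a constant and therefore lies in $K$, and $c\in K$, we conclude $y^m\in K$.

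I do not expect a genuine obstacle here: everything reduces to the product rule and to solving $cy^m=\lambda$ for $c$. The single point that must not be glossed over is the appeal to the shared-constants convention, which is exactly what upgrades ``$u$ is a constant'' to ``$u\in K$''; without it neither $c=\lambda y^{-m}$ with $\lambda\in K$ nor $y^m\in K$ would follow. As a consistency check I would also observe that part (2) is part (1) specialized: for $c\neq 0$, the condition $u'=0$ is equivalent to $\tilde c = c'+mac = 0$, i.e. to $c'/c = -ma = -m(y'/y)$, which says $(cy^m)'/(cy^m)=0$ and recovers directly that $cy^m$ is constant.
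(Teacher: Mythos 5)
Your proof is correct and follows essentially the same route as the paper's: direct differentiation for part (1), and for part (2) the observation that $u'=0$ makes $u$ a constant $\lambda$ (which lies in $K$ by the shared-constants convention), whence $c=\lambda y^{-m}$ and $y^m=c^{-1}\lambda\in K$. You are also right to insist on the factor $c$: the correct coefficient is $\tilde c=c'+mac$ rather than the $\tilde c=c'+ma$ printed in the statement — a typo in the lemma, harmless for its later use since only the form $u'=\tilde c\,y^m$ with $\tilde c\in K$ is needed.
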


\begin{proof} The first relation follows from direct differentiation. If $u'=0$, then $u$ is a constant $\lambda$. Thus, $cy^m=\lambda$ which implies the relation $c=\lambda y^{-m}$ and $y^m=c^{-1}\lambda$.
\end{proof}

\begin{definition} Let $F=K\langle y \rangle$ be an extension of a differential field $K$ by an element $y$. Then $F$ is an {\it extension by exponential of integral of $a\in K$} (a {\it transcendental extension by  exponential of integral of $a\in K$}) if $y'=ay$ (if $y$ is transcendental over $K$ and $y'=ay$).
\end{definition}

\begin{theorem}\label{exp}  Let $y_1,\dots,y_n\in F$ be the exponentials of integrals of elements $a_1,\dots, a_n \in K$, where $K$ is a differential subfield of $F$. Assume that there are no nontrivial  monomials $u=y_1^{k_1}\dots y_n^{k_n}$, where $k_1,\dots, k_n$ are not necessary positive integers (assuming that, at least, one $k_i\neq 0$) such that a relation $u=c$ for some $c\in K$ holds. Then the elements $y_1,\dots,y_n$ are algebraically independent over the field $K$.
\end{theorem}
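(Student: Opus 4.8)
The plan is to argue by contradiction: I will assume that $y_1,\dots,y_n$ are algebraically dependent over $K$ and manufacture from this assumption exactly the forbidden monomial relation. The engine of the proof is the several-variable analogue of Lemma \ref{monom}. Writing $y^\alpha = y_1^{\alpha_1}\cdots y_n^{\alpha_n}$ for a multi-index $\alpha=(\alpha_1,\dots,\alpha_n)\in\Bbb Z^n$, the Leibniz rule together with $y_i'=a_iy_i$ gives, for any $c\in K$,
$$(c\,y^\alpha)' = \left(c' + \Big(\sum_i \alpha_i a_i\Big)c\right) y^\alpha.$$
In words, differentiation never mixes distinct monomials $y^\alpha$: it only modifies their $K$-coefficients. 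This is the multiplicative-to-additive phenomenon already recorded for logarithmic derivatives, and it is what makes the monomials behave like eigenvectors of the derivation.

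First I would choose, among all nonzero polynomial relations $\sum_{\alpha\in S}c_\alpha y^\alpha=0$ with $c_\alpha\in K\setminus\{0\}$, one whose number of terms $|S|$ is as small as possible. Since the $y_i$ are nonzero, a one-term relation is impossible, so $|S|\geq 2$. Fixing one index $\beta\in S$ and dividing through by $c_\beta$, I may assume the coefficient of $y^\beta$ is $1$. Differentiating the relation via the formula above and subtracting $\big(\sum_i \beta_i a_i\big)$ times the original relation yields
$$\sum_{\alpha\in S}\Big(c_\alpha' + \big(\textstyle\sum_i(\alpha_i-\beta_i)a_i\big)c_\alpha\Big)\,y^\alpha=0.$$
The $\alpha=\beta$ term now carries the coefficient $1'=0$ and drops out, producing a vanishing relation with strictly fewer than $|S|$ monomials. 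By minimality this must be the zero relation, i.e.
$$c_\alpha' + \Big(\sum_i(\alpha_i-\beta_i)a_i\Big)c_\alpha = 0\qquad\text{for every }\alpha\in S\setminus\{\beta\}.$$

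To finish, I would read these vanishing conditions back through the monomial formula: for each $\alpha\neq\beta$ in $S$ they say precisely that $(c_\alpha\,y^{\alpha-\beta})'=0$. Hence $c_\alpha\,y^{\alpha-\beta}$ is a constant, and since all constants lie in $K$, the monomial $y^{\alpha-\beta}$ itself lies in $K$. As $\alpha\neq\beta$, the exponent vector $\alpha-\beta$ is nonzero, so this is a nontrivial monomial of the forbidden type, contradicting the hypothesis and completing the proof.

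The only genuinely delicate point, and the step I would treat most carefully, is the minimality argument: one must check that after differentiating and subtracting the chosen multiple the $\beta$-term really vanishes (this is exactly why first dividing by $c_\beta$, so that the distinguished coefficient is the constant $1$, is essential) and that the resulting shorter relation is forced to be \emph{trivial} rather than merely shorter. Everything else—the monomial differentiation formula and the passage from $(c_\alpha y^{\alpha-\beta})'=0$ to $y^{\alpha-\beta}\in K$—is a direct application of Lemma \ref{monom} and the standing assumption that the field of constants is contained in $K$.
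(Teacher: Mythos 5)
Your proposal is correct and follows essentially the same route as the paper: take a relation with the minimal number of monomial terms, normalize one term, differentiate to drop that term, invoke minimality to force all remaining coefficients of the differentiated relation to vanish, and then use Lemma \ref{monom} to produce a forbidden monomial $y^{\alpha-\beta}\in K$. The only cosmetic difference is that you divide by the coefficient $c_\beta$ and then subtract a $K$-multiple of the original relation, whereas the paper divides by the entire term $c_\beta y^\beta$ so that differentiation alone kills the distinguished term; both devices accomplish the same reduction.
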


\begin{proof} Assume that $y_1,\dots,y_n$ are algebraically dependent and let  $P=\sum c_{m_1,\dots,m_n}x_1^{m_1}\dots x_n^{m_n}$ be a nonzero Laurent polynomial  over $K$ (i.e., $m_1,\dots,m_n$ are not necessarily positive integral numbers)  which annihilates $y_1,\dots,y_n$, i.e., $\sum c_{m_1,\dots,m_n}y_1^{m_1}\dots y_n^{m_n}=0$. One can choose such Laurent polynomial $P$ which contains the smallest possible number of nonzero terms $c_{m_1,\dots,m_n}y_1^{m_1}\dots y_n^{m_n}$. By dividing the relation $P=0$ by one of its terms, one obtains a Laurent polynomial $\tilde P$ having the same number of terms as $P$, but one of its terms is equal to $1$. By Lemma \ref{monom}, the derivative $u'$ of term $u=c_{m_1,\dots,m_n}y_1^{m_1}\dots y_n^{m_n}$ has a form $u'=\tilde c_{m_1,\dots,m_n}y_1^{m_1}\dots y_n^{m_n}$ for some $ \tilde c_{m_1,\dots,m_n}\in K$.

Thus, by differentiating the identity $\tilde P=0$, one obtains a relation $\tilde P'(y_1,\dots,y_n)=0$ containing smaller number of terms (the derivative of the term $1$ is equal to zero). Thus, the derivative of each term of the Laurent polynomial $\tilde P$ is equal to zero. By Lemma \ref{monom}, it means that some nontrivial monomial $y_1^{m_1}\dots y_n^{m_n}$ belongs to the field $K$.
\end{proof}

\section{Transcendental extensions by exponentials of integrals}

Assume that a differential field $F=K\langle y_1,\dots,y_n\rangle$  is generated over a field $K$ by nonzero elements  $y_1,\dots, y_n$, such that $y'_i=a_i y_i$, where $a_i$ belong to $K$. Let $G$ be the multiplicative subgroup in $F^*$ generated by elements $y_1,\dots, y_n$, and let $G_0$ be its subgroup which is equal to the intersection of $G$ with the field $K$, $G_0=G\cap K^*$. Denote by $\Psi_0\subset \Psi$ the images of the multiplicative groups $G_0, G$  under the map which sends $y\in F^*$ to $y'/y\in F_+$. By construction, the group $\Psi$ is generated by the elements $a_i\in K_+$.

\begin{theorem}\label{factor} The factor group $G/G_0$ has the following properties:
\begin{enumerate}
\item If $G/G_0$ is torsion free, then there is a chain of fields $K=K_0\subset \dots \subset K_m=F$, such that  for $i=0,\dots, m-1$ the field $K_{i+1}$ is equal to $K_i\langle u_{i+1}\rangle$, where $u_{i+1}$ is an exponent of an integral over $K_i$, transcendental over $K_i$.

\item If $G/G_0$ has nontrivial torsion, then there is no chain of fields $K=K_0 \subset \dots \subset K_m \supset F$, such that for $i=0,\dots, m-1$ the field $K_{i+1}$ is a pure transcendental extension of the field $K_i$.
    \end{enumerate}
\end{theorem}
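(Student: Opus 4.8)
The plan is to treat the two parts separately, using the finitely generated abelian group $G$ as the organizing object and Theorem~\ref{exp} as the bridge between algebra (algebraic independence of exponentials of integrals) and the group structure of $G/G_0$. Note first that $G$, being generated by $y_1,\dots,y_n$ inside the abelian group $F^*$, is a finitely generated abelian group, and hence so is the quotient $G/G_0$. The crucial observation is that a monomial $u=y_1^{k_1}\cdots y_n^{k_n}$ satisfies a relation $u=c$ with $c\in K$ precisely when $u\in G\cap K^*=G_0$, i.e.\ precisely when the image of $u$ in $G/G_0$ vanishes. Thus Theorem~\ref{exp} translates into the following principle: a family of monomials in the $y_i$ is algebraically independent over $K$ as soon as their images in $G/G_0$ are linearly independent over $\mathbb{Z}$.

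For part~(1), I would argue as follows. If $G/G_0$ is torsion free it is free abelian of some finite rank $m$, so I can choose monomials $u_1,\dots,u_m\in G$ whose images form a $\mathbb{Z}$-basis of $G/G_0$. Each $u_i$ is a monomial in the $y_j$, so by the logarithmic derivative homomorphism its logarithmic derivative $b_i=u_i'/u_i$ is a $\mathbb{Z}$-linear combination of the $a_j$ and therefore lies in $K$; that is, $u_i$ is an exponential of integral over $K$. A monomial relation $\prod_i u_i^{k_i}=c\in K$ would send $\sum_i k_i\bar u_i$ to $0$ in $G/G_0$, forcing all $k_i=0$ because the $\bar u_i$ form a basis; so by Theorem~\ref{exp} the elements $u_1,\dots,u_m$ are algebraically independent over $K$. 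Setting $K_i=K_{i-1}\langle u_i\rangle$, algebraic independence guarantees that each $u_i$ is transcendental over $K_{i-1}$ (by Liouville's Principle, Lemma~\ref{principle}, $K_{i-1}$ is identified with the rational function field $K(u_1,\dots,u_{i-1})$), and $u_i$ is an exponential of integral over $K_{i-1}\supseteq K$; this is exactly the chain required. It then remains to check $K_m=F$: since each $\bar y_j$ is an integer combination of the $\bar u_i$, each $y_j$ equals a monomial in the $u_i$ times an element of $G_0\subseteq K$, whence $y_j\in K_m$ and $F\subseteq K_m$; the reverse inclusion is clear since each $u_i\in F$.

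For part~(2), suppose $G/G_0$ has nontrivial torsion. Then there is a monomial $g=y_1^{k_1}\cdots y_n^{k_n}\in G$ with $g\notin G_0$ but $g^N\in G_0$ for some $N>1$; equivalently $g\notin K$ while $g^N\in K$. Thus $g$ is a root of $t^N-g^N\in K[t]$, so $g$ is algebraic over $K$ yet does not belong to $K$. If there existed a chain $K=K_0\subset\dots\subset K_m\supseteq F$ with every step pure transcendental, then by the composition lemma for pure transcendental extensions the extension $K\subseteq K_m$ would itself be pure transcendental; since $g\in F\subseteq K_m$ is algebraic over $K$, this would force $g\in K$, a contradiction. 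Hence no such chain exists.

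The routine structure-theoretic inputs (finite generation of $G$, freeness of a torsion-free finitely generated abelian group, the composition lemma for pure transcendental extensions) are harmless; the step requiring the most care is the transcendence argument in part~(1). The delicate point is to convert \emph{no monomial relation of the $u_i$ with $K$} into genuine algebraic independence over $K$ via Theorem~\ref{exp}, and then to identify $K_{i-1}$ with a rational function field so that algebraic independence of the whole family yields transcendence of the next generator at each stage; both rely on Liouville's Principle and must be applied in the correct order. Once this is in place, verifying $K_m=F$ is mere bookkeeping with the chosen basis, and part~(2) reduces to an immediate contradiction as soon as the torsion element is exhibited.
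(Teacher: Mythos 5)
Your proof is correct and follows essentially the same route as the paper: for part (1) it applies Theorem~\ref{exp} to a lifted $\mathbb{Z}$-basis of the free abelian group $G/G_0$ to get algebraic independence and build the chain, and for part (2) it exhibits a torsion element that is algebraic over $K$ but not in $K$ and invokes the composition lemma for pure transcendental extensions. Your explicit choice of a \emph{basis} (rather than mere generators) and your verification that $K_m=F$ supply details the paper leaves implicit, but the argument is the same.
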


\begin{proof}
	\begin{enumerate}
\item  Let $u_1,\dots,u_m$ be generators of the factor group $G/G_0$. The elements $u_1,\dots,u_m$ can be chosen to be exponentials of integrals over the field $K_0$. Each monomial in $u_1,\dots,u_m$ does not belong to the subgroup $G_0=G\cap K_0$, since the factor group is torsion free. Thus, by Theorem \ref{exp}, elements $u_1,\dots,u_m$ are algebraically independent over $K_0$.  Let $K_i$ for $i=1,\dots,m$ be the field $K_0\langle u_1,\dots,u_i\rangle$.  By construction in the chain of fields, $K_0\subset K_1\dots\subset K_m$. for $i=0,\dots, m-1$ the field $K_{i+1}$  is the extension of $K_i$ by exponent of integral $u_{i+1}$ over the field $K_0\subset K_i$. The field $K_m$ is equal to the field $F$.  All these extensions $K_i\subset K_{i+1}$ are pure transcendental, since $u_1,\dots,u_m$ are algebraically independent over $K_0$.

\item If the group $G/G_0$ has nontrivial torsion, then there is a monomial $u$ in $y_1,\dots, y_n$ which does not belong to $G_0$ but some positive power of it does. Thus, $u$ does not belong to the field $K_0$  but some positive power of it does.

The monomial $u$ is an algebraic element over $K_0$ which does not belong to $K_0$. Any chain $K_0\subset\dots\subset K_m$ of pure transcendental extensions provides a transcendental extension $K_0 \subset K_m$ which can not contain the element $u\in F$.
	\end{enumerate}
\end{proof}

\section{Criterion of solvability in admissible extensions}		 

Let $L = a_n D^n + \ldots + a_0$, where $a_i\in K$ be a linear differential operator of order $n$ over $K$. In this section, we will state and prove a criterion of solvability of the homogeneous equation
\begin{equation}\label{first}
L(y)= a_ny^{(n)}+\dots + a_1y=0
\end{equation}
over $K$ by an admissible extension $E$  of $K$.

\begin{theorem} All solutions of (\ref{first}) belong to an admissible extension $E \supset K$ if and only if  the following two conditions simultaneously hold:
\begin{enumerate}
\item there is a sequence  of nonzero elements $z_1,\dots, z_n$ which are exponential of integrals over $K$, i.e., $z'_1=p_1 z_1,\dots, z'_n=p_nz_n$ and $p_1,\dots, p_n\in K$
such that the operator $L$ is representable in the form
\begin{equation}\label{decom}
L=a_nL_n\circ \dots\circ L_1,
\end{equation}
where each operator $L_i$ is an operator of  order one. Moreover, $L_i$ is equal to $D - p_i$;

\item let $G$ be the multiplicative group generated by $z_1,\dots,z_n$ and let $G_0$ be the intersection of $G$ with $K$, $G_0=G\cap K$. Then the factor group $G/G_0$ is torsion free.

\end{enumerate}
\end{theorem}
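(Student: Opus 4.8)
The plan is to prove both implications, drawing on the reduction-of-order procedure of Section~5 together with Theorem~\ref{firstmain} for the forward direction, and on Theorem~\ref{factor} together with a reconstruction of the fundamental system for the converse. Throughout I will use that an admissible extension is pure transcendental over $K$, as already exploited in the proof of Theorem~\ref{firstmain}.

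For the forward direction, suppose all solutions of (\ref{first}) lie in an admissible extension $E$ of $K$. I would first establish condition~1 by induction on the order $n$, proving the stronger statement that the elements $z_i$ can be chosen inside $E$. Since $E$ contains a nonzero solution, Theorem~\ref{firstmain} produces a solution $z_1\in E$ with logarithmic derivative $p_1=z_1'/z_1\in K$, so $z_1$ is an exponential of an integral over $K$ and $L_1=D-p_1$ annihilates it. Right division gives $L=\tilde L\circ L_1$ with $\tilde L$ of order $n-1$ and, crucially, with coefficients in $K$ because $p_1\in K$; its leading coefficient is again $a_n$. The operator $L_1$ maps the solution space of $L$ onto that of $\tilde L$: its kernel among solutions of $L$ is one-dimensional, spanned by $z_1$, and dimensions are counted over the field of constants common to $K$ and $E$. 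Hence every solution of $\tilde L$ lies in $E$, and the inductive hypothesis applied to $\tilde L$ yields $z_2,\dots,z_n\in E$, exponentials of integrals over $K$, with $\tilde L=a_n(D-p_n)\circ\cdots\circ(D-p_2)$. This gives the factorization (\ref{decom}) of $L$ with all $z_i\in E$.

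Condition~2 then follows without a further induction: the field $F=K\langle z_1,\dots,z_n\rangle$ is contained in $E$, which is pure transcendental over $K$. Were $G/G_0$ to have nontrivial torsion, Theorem~\ref{factor}(2) would forbid any chain of pure transcendental extensions of $K$ from containing $F$, contradicting $F\subseteq E$; hence $G/G_0$ is torsion free. For the converse, assume conditions~1 and~2. Since $G/G_0$ is torsion free, Theorem~\ref{factor}(1) provides a chain $K=K_0\subset\cdots\subset K_m=F$ in which each step adjoins a transcendental exponential of an integral, and such elements are special transcendental (the Lemma following Definition~\ref{special}); thus $F$ is an admissible extension of $K$. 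Using the factorization $L=a_n(D-p_n)\circ\cdots\circ(D-p_1)$, I would reconstruct a fundamental system by reversing the reduction of order: solutions of $(D-p_k)\circ\cdots\circ(D-p_1)$ arise from those of the lower operator together with a particular solution of $(D-p_{k-1})\circ\cdots\circ(D-p_1)(y)=z_k$, which is resolved by a sequence of first-order equations $L_i(\cdot)=(\text{known element})$, each solved by one integration via Lemma~\ref{simple}. Each adjoined integral is, by Lemma~\ref{integral}, either already present or transcendental, and adjoining a transcendental integral $u$ is an admissible step because $u^{-1}$ is a special transcendental element and $K\langle u\rangle=K\langle u^{-1}\rangle$. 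Therefore a full fundamental system of (\ref{first}) lies in an admissible extension of $K$.

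The hard part will be the bookkeeping in the forward induction: one must verify that the reduced operator $\tilde L$ stays over $K$ (ensured by $p_1\in K$ from Theorem~\ref{firstmain}) and that its \emph{entire} solution space, not merely a single solution, is captured in $E$, via the surjectivity of $L_1$ on solution spaces and the coincidence of constants. The delicate coordination is that condition~1 is obtained by induction with the $z_i$ placed inside $E$, whereas condition~2 is deduced separately from Theorem~\ref{factor}(2), since it concerns the group generated by \emph{all} of $z_1,\dots,z_n$ and does not descend from the analogous statement for $\tilde L$.
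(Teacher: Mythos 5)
Your proposal is correct and follows essentially the same route as the paper's proof: repeated application of Theorem~\ref{firstmain} with right division to obtain the factorization and the $z_i$ inside $E$, Theorem~\ref{factor}(2) for torsion-freeness, and Theorem~\ref{factor}(1) plus step-by-step integration via Lemma~\ref{simple} for the converse. Your added care about the surjectivity of $L_1$ on solution spaces and about adjoining integrals via $u^{-1}$ being special transcendental only makes explicit what the paper asserts more briefly.
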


\begin{proof} If there is an admissible extension $E$ containing all solutions of (\ref{decom}), then by Theorem \ref{firstmain}, the equation  has to have a solution $z_1$ whose  logarithmic  derivative $p_1$ belongs to $K$. Let $L_1$ be the operator $y'-p_1y_1$. Let us divide $L$ on $L_1$ from the right, $L=\tilde L_1\circ L_1$. The coefficients of $\tilde L_1$ belongs to $K$, since $p_1\in K$. Each solution $u$ of $\tilde L_1 (y) = 0$ is representable in the form $u=L_1(y)$, where $y$ is a solution of the original equation. So all solutions of $\tilde L_1 (y) = 0$ also belong to the admissible extension $E$ of the field $K$.

Thus, one can apply Theorem \ref{firstmain} to the equation $\tilde L_1 (y) = 0$ to find its solution $z_2$ whose logarithmic derivative $p_2$ belongs to $K$. Let $L_2$ be the operator $D - p_2$ and let us represent $\tilde L_1$ in the form $\tilde L_1=\tilde L_2\circ L_2$.

Repeating  this process, one obtains a sequence of exponentials of integrals of $p_1, \ldots , p_n \in K$ for which decomposition (\ref{decom}) of the operator $L$ holds.

The exponential of integrals $z_1,\dots,z_n$ belong to an admissible extension $E$. By Theorem \ref{factor}, the group $G/G_0$ is torsion free.

The criterion for solvability is proven in one direction. Let us prove it in the opposite direction. If the group $G/G_0$ is torsion free, by Theorem \ref{factor}, one can construct an admissible extension $E_1$ of $K$ which contains all the exponential of integrals $z_1,\dots, z_n$. Over the field $E_1$ the operator $L$ is decomposed as a product of factors $D - p_i$. Thus, the equation $L=0$ one can solved step by step, solving equations $L_i(y)=u$. Since $z_i\in E$, solutions of these equation can be obtained in extensions by integrals (see Lemma \ref{simple}). By Lemma \ref{integral}, such extensions are pure transcendental. So, extending $E_1$ by a chain of extensions obtained by adjoining integrals one at a time, one obtains an admissible extension $E$ which contains all solutions of (\ref{first}).
\end{proof}

Note that if the group $G/G_0$ has nontrivial torsion, then the extension of $K$ by adjoining all solutions of (\ref{first}) is not pure transcendental. If it is pure transcendental (as in Lemma \ref{trans}), then the group $G/G_0$ automatically has no torsion. Such an equation is solvable by admissible extension if and only if the operator $L$ admits a decomposition (\ref{decom}).

\bigskip

Askold Khovanskii\\
Department of Mathematics, University of Toronto, Toronto, ON, Canada\\
email: askold@math.toronto.edu
\bigskip

Aaron Tronsgard\\
Department of Mathematics, University of Toronto, Toronto, ON, Canada\\
email: aaron.tronsgard@mail.utoronto.ca

\end{document}